\theoremstyle{theorem}
\newtheorem{theorem}{Theorem}
\theoremstyle{definition}
\newtheorem{definition}[theorem]{Definition}
\newtheorem{lemma}[theorem]{Lemma}
\newtheorem{remark}[theorem]{Remark}
\def\F{{\mathbb{F}}}
\def\char{{\rm char \,}}
\def\per{{\rm per}\,}
\def\prk{{\rm prk}\,}
\def\Mat{{\rm Mat}}
\def\dim{{\rm dim\, }}
\def\diag{{\rm diag}}
\def\rk{{\rm rk}\,}
\numberwithin{theorem}{section}
\begin{document}

	\title{On linear preservers of permanental rank}
	
	\thanks{The second author is supported by ISF Moked grant 2919/19.}
	
	\author{A. E. Guterman}
	\address{A. E. Guterman: \newline Department of Mathematics, Bar-Ilan University, Ramat-Gan 5290002, Israel; \newline 
		Moscow Center for Fundamental and Applied Mathematics, Moscow 119991, Russia}
	\email{alexander.guterman@biu.ac.il}

	\author{I. A. Spiridonov}
	\address{I. A. Spiridonov: \newline Weizmann Institute of Science, Rehovot 7610001, Israel; \newline Moscow Center for Fundamental and Applied Mathematics, Moscow 119991, Russia}
	\email{spiridonovia@ya.ru}

	\keywords{Permanent, Rank, Linear map, Preservers.}
	
	\subjclass{15A86, 05C50, 15A15, 47L05.}
	
	\maketitle
	\thispagestyle{empty}
	
	\vspace{0.5cm}

	\begin{abstract}
		Let $\Mat_{n}(\F)$ denote the set of square $n\times n$ matrices over a field $\F$ of characteristic different from two. The permanental rank $\prk(A)$ of a matrix $A \in \Mat_{n}(\F)$ is the size of the maximal square submatrix in $A$ with nonzero permanent. By $\Lambda^{k}$ and $\Lambda^{\leq k}$ we denote the subsets of matrices $A \in  \Mat_{n}(\F)$ with $\prk(A) = k$ and $\prk(A) \leq k$, respectively. In this paper for each $1 \leq k \leq n-1$ we obtain a complete characterization of  linear maps $T: \Mat_{n}(\F) \to \Mat_{n}(\F)$ satisfying $T(\Lambda^{\leq k}) = \Lambda^{\leq k}$ or bijective linear maps satisfying $T(\Lambda^{\leq k}) \subseteq \Lambda^{\leq k}$. Moreover, we show that if  $\F$ is an infinite field, then $\Lambda^{k}$ is Zariski dense in $\Lambda^{\leq k}$ and apply this to describe such bijective linear maps   satisfying $T(\Lambda^{k}) \subseteq \Lambda^{k}$.
	\end{abstract}
	
	\section{Introduction}
	
	Let $\Mat_{n}(\F)$ denote the set of $n \times n$ matrices over a field $\F$ with $\char (\F) \neq 2$; throughout the paper we assume that $n \geq 3$. The theory of linear operators which map certain special sets or functions on matrices
	into themself, so-called linear preservers, has been intensively investigated for more than 100 years. This theory was started in 1897 with the work of Frobenius \cite{Frobenius}, where the complete description of determinant-preserving linear maps $T: \Mat_{n}(\F) \to \Mat_{n}(\F)$ was obtained.
	In 1949, Dieudonne \cite{Dieudonne} received a complete characterization of bijective linear maps $T: \Mat_{n}(\F) \to \Mat_{n}(\F)$ preserving the set of singular matrices. In 1959, Marcus and Moyls \cite{MarcusMoyls} classified linear maps of $\Mat_{n}(\F)$ preserving the rank function for algebraically closed fields of characteristic zero and described  rank-$1$ preserving maps, see \cite{MarcusMoyls2}. In the same year, Marcus and Purves \cite{MarcusPurves} described linear preservers of symmetric functions on matrix eigenvalues, in particular, they classified linear maps preserving the set of invertible matrices. Then Westwick \cite{Westwick} generalized results on linear rank-preserving maps for arbitrary algebraically closed fields.
	
	Later, the classification problem of rank-$k$ preserving linear maps was studied for fixed $k$, where $1 \leq k \leq n$.
	Beasley in a series of papers  \cite{Beasley70, Beasley81, Beasley83, Beasley} studied nonsingular linear maps $T: \Mat_{n}(\F) \to \Mat_{n}(\F)$ preserving the set of matrices of rank $k$, under some conditions on $k$ and $\F$.   Beasley and Laffey obtained a complete description of nonsingular linear  rank-$k$ preservers provided that $\F$ has at least four elements.
	Chan and Lim \cite{ChanLim} obtained a description of linear maps preserving the set of nonzero matrices of rank less or equal to $k$ using the results of Flanders \cite{Flanders} on maximal subspaces of rank-bounded matrices.
	Loewy \cite{Loewy} obtained a classification of rank-$k$ nonincreasing linear maps $T: \Mat_{n}(\F) \to \Mat_{n}(\F)$ for algebraically closed fields $\F$.
	Linear preservers of some more general functions on different matrix subspaces were also investigated.  For the survey of results in this area see ~\cite{Li92, Molnar, Pierce, Semrl06} and references therein.

	In parallel to determinant-preserving and rank-preserving problems, there are similar problems related to the matrix permanent function instead of the determinant.
	Marcus and May \cite{Marcus} obtained the complete characterization of the linear maps $T: \Mat_{n}(\F) \to \Mat_{n}(\F)$, satisfying $\per(T(A)) = \per(A)$. This result was extended and generalized in many further works, see for example, \cite{coelho2006} and its references. In \cite{GutSp} the authors proved that any additive surjective map, satisfying $\per(T(A)) = \per(A)$, is necessary linear. A natural problem is to study maps preserving the analog of rank based on the permanent function, so called \textit{permanental rank}.
	
	\begin{definition}
		Let $A \in \Mat_{n}(\F)$.  Permanental rank  $\prk(A)$ of the matrix  $A$ is the size of the maximal square submatrix in  $A$ 
		with nonzero permanent. 
	\end{definition}

	The permanental rank of a matrix is a classical invariant firstly introduced in \cite{Yu}. It has a lot of applications in combinatorics and linear algebra. A famous example is the Alon-Jaeger-Tarsi conjecture on nowhere-zero points of linear maps \cite{Alon}, which can be reformulated via the notion of permanental rank, and is recently partially proved in \cite{Nagy}. Also, permanental rank is useful in the study of other topics related to the permanent function, see, for example,~\cite{GutSp}.

	In this paper we study linear maps of $\Mat_n(\F)$ preserving the sets of matrices with fixed permanental rank and with permanental rank bounded from above. Let us denote these sets as follows
	$$\Lambda^{k} =\{A\in \Mat_{n}(\F) \; \vert \; \prk(A) = k\},$$
	$$\Lambda^{\leq k} =\{A\in \Mat_{n}(\F) \; \vert \; \prk(A) \leq k\}.$$
	Note that, unlike $\Lambda^k$, the set $\Lambda^{\leq k}$ is Zariski closed in $\Mat_n(\F)$.
	For each $1 \leq k \leq n-1$ we obtain a complete characterization of linear bijective maps $T: \Mat_{n}(\F) \to \Mat_{n}(\F)$ satisfying $T(\Lambda^{\leq k}) \subseteq \Lambda^{\leq k}$. Namely, we show that in this case $T$ is a composition of row (column) permutations, row (column) rescalings, and 
	transposition, see Theorem \ref{mainth1}. In the case where $\F$ is an infinite field, we obtain the similar result for linear bijective maps $T$ satisfying $T(\Lambda^{k}) \subseteq \Lambda^{k}$, see Theorem~\ref{mainth12}. Informally, this means that, unlike determinantal varieties, the varieties $\Lambda^{k}$ and $\Lambda^{\leq k}$ have very few symmetries.
	
	Our results are parallel to results of Beasley and Laffey \cite{BeasleyLaffey90} and Loewy \cite{Loewy}, where the similar questions were studied for the usual rank function. Our approach is to use the action of $T$ on the set of maximal subspaces in $\Lambda^{\leq k}$; the classification of such subspaces was recently obtained in \cite{GutMeshSp}. This approach dates back to the one used for usual rank in the classical results mentioned above. However, to solve the corresponding problem for permanental rank we transfer it to the graph-theoretical framework as follows. We consider maximal subspaces in $\Lambda^{\leq k}$ as the vertices of a complete weighted graph $\Theta_{n, k}$.  The weights are equal to the dimensions of the intersections of the subspaces in the vertices. Then we study the action of $T$ on this graph by automorphisms and finally show that either $T$ or its composition with the transposition maps rows to rows and columns to columns. As a consequence we characterize $T$.  We use  algebraic geometry approach  to extend our characterization to $\Lambda^{k}$-preservers.

	For a permutation $ \sigma \in S_n $, we denote by $P(\sigma)=(p_{ij})$ the permutation matrix corresponding to $\sigma$, i.e., $$p_{ij}=\left\{ \begin{array}{cl} 1, & \mbox{ if } j=\sigma^{-1}(i) \\ 0, & \mbox{ otherwise } \end{array} \right. $$
	The main result of the present paper is as follows.

	\begin{theorem}\label{mainth1} 
		Let $\F$ be an arbitrary field with $\char (\F) \neq 2$. Let
		$1 \leq k \leq n-1$, and $T: \Mat_{n}(\F) \to \Mat_{n}(\F)$ be a linear bijective map. Then $T$ satisfies $T(\Lambda^{\leq k}) \subseteq \Lambda^{\leq k}$ if and only if there exist permutations $\sigma_1, \sigma_2 \in S_n$ and diagonal matrices $D_1, D_2 \in \Mat_n(\F)$ with nonzero diagonal elements such that either
		\begin{equation}\label{eq1}
			T(A) = D_1 P(\sigma_1) A P(\sigma_2) D_2 \;\;\; \mbox{for all} \;\;\; A \in \Mat_n(\F)
		\end{equation}
		or
		\begin{equation}\label{eq2}
			T(A) = D_1 P(\sigma_1) A^T P(\sigma_2) D_2 \;\;\; \mbox{for all} \;\;\; A \in \Mat_n(\F).
		\end{equation}
	\end{theorem}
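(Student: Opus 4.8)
The ``if'' direction is immediate and I would dispose of it first: left and right multiplication by invertible diagonal matrices and by permutation matrices only permutes and rescales the rows and columns of $A$, which multiplies the permanent of every square submatrix by a nonzero scalar and hence leaves $\prk$ unchanged, while transposition leaves every subpermanent unchanged. Thus any $T$ of the form \eqref{eq1} or \eqref{eq2} even satisfies $T(\Lambda^{\leq k}) = \Lambda^{\leq k}$. The whole content is the converse, and the plan is to recover the combinatorial data of $T$ from its action on the inclusion-maximal linear subspaces of $\Lambda^{\leq k}$.

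For a $k$-subset $S \subseteq \{1,\dots,n\}$ let $R_S$ (resp.\ $C_S$) denote the space of matrices supported on the rows (resp.\ columns) indexed by $S$; each has dimension $nk$ and lies in $\Lambda^{\leq k}$, since any $(k+1)\times(k+1)$ submatrix then has an entirely zero row (resp.\ column). By the classification of \cite{GutMeshSp} these $2\binom{n}{k}$ spaces are exactly the inclusion-maximal subspaces of $\Lambda^{\leq k}$, and $nk$ is the largest dimension of any subspace contained in $\Lambda^{\leq k}$. Since $T$ is a linear bijection with $T(\Lambda^{\leq k})\subseteq\Lambda^{\leq k}$, the image of a maximal subspace is again an $nk$-dimensional subspace inside $\Lambda^{\leq k}$, hence again maximal; as there are finitely many maximal subspaces, $T$ permutes them. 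Moreover $T$ preserves the dimension of every intersection, so it acts as an automorphism of the complete weighted graph $\Theta_{n,k}$ whose vertices are the $R_S,C_S$ and whose edge weights are $\dim(R_S\cap R_{S'}) = n\,|S\cap S'|$, $\dim(C_S\cap C_{S'}) = n\,|S\cap S'|$, and $\dim(R_S\cap C_{S'}) = k^2$.

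The heart of the argument is the determination of the automorphisms of $\Theta_{n,k}$, and I expect this to be the main obstacle. The constant cross-weight $k^2$ against the variable within-part weights $n\,|S\cap S'|$ should force any automorphism either to preserve the bipartition into row-type and column-type vertices or to interchange the two parts; composing $T$ with the transposition (which realises the swap) I may then assume the bipartition is preserved, so that $T(R_S)=R_{\pi(S)}$ and $T(C_S)=C_{\rho(S)}$ for permutations $\pi,\rho$ of the $k$-subsets preserving all intersection sizes. Such $\pi,\rho$ are automorphisms of the Johnson scheme $J(n,k)$ and are therefore induced by permutations $\sigma_1,\sigma_2\in S_n$ of $\{1,\dots,n\}$. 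The difficulty is twofold. First, the bipartition must be shown canonical even for the values of $n,k$ at which $k^2$ coincides with some within-part weight $n\,|S\cap S'|$, where one has to separate the parts by finer invariants, such as the within-part maximal weight $n(k-1)$. Second, when $n=2k$ the scheme $J(n,k)$ carries the extra complementation automorphism $S\mapsto \{1,\dots,n\}\setminus S$, which preserves all weights of $\Theta_{n,k}$; I would rule this spurious automorphism out using that $T$ is an honest linear map, via the identity that $\bigcap_{S\ni i} R_S$ equals the space of matrices supported on the single row $i$ (valid for $k\le n-1$), whose image under a complementation would be forced to be $\{0\}$, contradicting bijectivity of $T$.

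Once $T(R_S)=R_{\sigma_1(S)}$ and $T(C_S)=C_{\sigma_2(S)}$, intersecting over all $k$-subsets containing a fixed index shows that $T$ carries the space supported on row $i$ to the one supported on row $\sigma_1(i)$, and likewise for columns; intersecting a single-row and a single-column space then gives $T(E_{ij}) = c_{ij}\,E_{\sigma_1(i)\sigma_2(j)}$ for nonzero scalars $c_{ij}$, where $E_{ij}$ are the matrix units. After composing with the permutations $P(\sigma_1)$ and $P(\sigma_2)$ it remains to establish the multiplicative relation $c_{ij}c_{ml}=c_{il}c_{mj}$, which factorises $c_{ij}=a_ib_j$ and yields the diagonal matrices $D_1,D_2$. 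This relation I would obtain from a standard test configuration: placing a $2\times2$ pattern in rows $\{i,m\}$ and columns $\{j,l\}$ alongside a disjoint $(k-1)\times(k-1)$ identity block, the permanental rank of the resulting matrix is $k$ or $k+1$ according to whether a single $2\times2$ subpermanent vanishes, and applying the same criterion to its image under $T$ forces the desired equality. This completes the reduction to \eqref{eq1}, with \eqref{eq2} arising precisely in the transposed case.
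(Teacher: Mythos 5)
Your overall strategy coincides with the paper's: classify the $nk$-dimensional subspaces of $\Lambda^{\leq k}$ via \cite{GutMeshSp}, let $T$ act on the complete weighted graph $\Theta_{n,k}$, show that the row/column bipartition is preserved or swapped, descend to single rows and columns by intersecting, obtain $T(E_{i,j})=c_{i,j}E_{\sigma_1(i),\sigma_2(j)}$, and factor the scalar matrix $(c_{i,j})$ as rank one using a test matrix consisting of a $2\times2$ block with vanishing permanent together with a disjoint $(k-1)\times(k-1)$ permutation block --- this last step is exactly the paper's Lemma~\ref{step2}. Your treatment of the complementation automorphism for $n=2k$ (using $\bigcap_{S\ni i}V_S^{row}=R_i$ and injectivity of $T$) is sound and is in effect the paper's dimension count $n=\dim T(R_i)=n|W_i|$ in Lemma~\ref{step0}; your appeal to the automorphism theorem for the Johnson scheme is a heavier external input than the paper needs (the paper gets $T(R_i)=R_{\sigma_1(i)}$ directly from that dimension count, with no scheme-theoretic classification), but it is legitimate. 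A minor slip: the classification in \cite{GutMeshSp} is of subspaces of the \emph{maximum dimension} $nk$, not of all inclusion-maximal subspaces; your argument only ever uses dimension-maximality, so this does not harm the proof, but the claim as worded is not what is cited.

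The genuine gap is the exceptional pair $(k,n)=(2,4)$. Your proposed finer invariant --- separating the two parts by the maximal within-part weight $n(k-1)$ --- is precisely the paper's device (the subgraph $\widehat\Theta_{n,k}$ of edges of weight $n(k-1)$, whose two connected components are the row part and the column part), and it works exactly when $k^2\neq n(k-1)$. But $k^2=n(k-1)$ has the unique admissible solution $(k,n)=(2,4)$, since $\gcd(k^2,k-1)=1$ forces $k=2$, and then $n=4$. In that case \emph{no} graph-theoretic invariant can rescue you: $\Theta_{4,2}$ is the complete graph on $12$ vertices in which every edge has weight $4$ except the six weight-$0$ edges $(V_S,V_{[4]\setminus S})$, which form a perfect matching; its automorphism group is the full stabilizer of that matching, of order $2^6\cdot 6!$, and it freely mixes row and column vertices, so the bipartition is simply not determined by the weighted graph. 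Here one must use that $T_*$ comes from a linear map, which is what the paper does in Case~2 of Lemma~\ref{step0}: it tracks how $T_*$ permutes the six weight-$0$ edges and, in the mixed cases, computes $T(R_i)$ as the intersection of the images of two row spaces already known to land on row spaces, recovering $T(R_i)=R_m$ by a dimension count. Note also that the naive dimension argument alone cannot close this case, because $\dim\bigl(V_S^{row}\cap V_{S'}^{col}\bigr)=k^2=4=n=\dim R_i$, so an intersection of a row space with a column space can masquerade as a single row; the case analysis over the weight-$0$ edges is genuinely needed. As written, your proof stalls at $(k,n)=(2,4)$.
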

	
	Let us note that the bijectivity condition in Theorem \ref{mainth1} is necessary, since otherwise the class of such maps would be significantly larger. For example, any linear map with the image lying inside $\Lambda^{\leq k}$ (we will see in Theorem \ref{th2} that there is a large family of vector subspaces inside $\Lambda^{\leq k}$) satisfies the condition $T(\Lambda^{\leq k}) \subseteq \Lambda^{\leq k}$, but is not of the form (\ref{eq1}) or (\ref{eq2}).
	
	However, it turns out that if we replace $T(\Lambda^{\leq k}) \subseteq \Lambda^{\leq k}$ by a stronger condition $T(\Lambda^{\leq k}) = \Lambda^{\leq k}$, then the bijectivity of $T$ is not necessary. The second form of the main result is as follows.
	
	\begin{theorem}\label{mainth} Let $\F$ be an arbitrary field  with $\char (\F) \neq 2$. Let $1 \leq k \leq n-1$, and $T: \Mat_{n}(\F) \to \Mat_{n}(\F)$ be a linear map. Then $T$ satisfies $T(\Lambda^{\leq k}) = \Lambda^{\leq k}$ if and only if there exist permutations $\sigma_1, \sigma_2 \in S_n$ and diagonal matrices $D_1, D_2 \in \Mat_n(\F)$ with nonzero diagonal elements such that either (\ref{eq1}) or (\ref{eq2}) holds.
	\end{theorem}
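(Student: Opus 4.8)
The plan is to derive this theorem from Theorem \ref{mainth1}, so that none of the graph-theoretic machinery needs to be repeated. The only genuinely new ingredient is that the \emph{equality} $T(\Lambda^{\leq k}) = \Lambda^{\leq k}$ forces $T$ to be bijective; once that is established, the inclusion $T(\Lambda^{\leq k}) \subseteq \Lambda^{\leq k}$ holds a fortiori and Theorem \ref{mainth1} applies verbatim to give the form (\ref{eq1}) or (\ref{eq2}). For the ``if'' direction I would check directly that every map of the form (\ref{eq1}) or (\ref{eq2}) satisfies $\prk(T(A)) = \prk(A)$ for all $A$ and is bijective, which together yield $T(\Lambda^{\leq k}) = \Lambda^{\leq k}$. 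The invariance of $\prk$ rests on three elementary facts about square submatrices: left and right multiplication by the permutation matrices $P(\sigma_1), P(\sigma_2)$ merely reindexes rows and columns, and the permanent is unchanged under permutations of rows or columns; left and right multiplication by the invertible diagonal matrices $D_1, D_2$ scales the permanent of each $r \times r$ submatrix $A[I,J]$ by the nonzero factor $\bigl(\prod_{i \in I}(D_1)_{ii}\bigr)\bigl(\prod_{j \in J}(D_2)_{jj}\bigr)$, so the vanishing of permanental minors is unaffected; and $\per(A^T) = \per(A)$ takes care of the transpose in (\ref{eq2}). Composing these gives $\prk \circ T = \prk$, and since $T$ is a composition of bijections the set equality follows.

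For the ``only if'' direction, suppose $T$ is linear and $T(\Lambda^{\leq k}) = \Lambda^{\leq k}$. The key observation is that $\Lambda^{\leq k}$ spans $\Mat_n(\F)$ as a vector space: for every $1 \le k \le n-1$ and all $i,j$ the matrix unit $E_{ij}$ has $\prk(E_{ij}) = 1 \le k$ (its unique nonzero entry gives a nonzero $1\times 1$ permanental minor, while every larger submatrix has at most one nonzero entry and hence permanent zero), so $E_{ij} \in \Lambda^{\leq k}$, and the $E_{ij}$ form a basis of $\Mat_n(\F)$. From $\Lambda^{\leq k} = T(\Lambda^{\leq k}) \subseteq \Im T$ we obtain $\Mat_n(\F) = \operatorname{span}(\Lambda^{\leq k}) \subseteq \Im T$, so $T$ is surjective and therefore, being an endomorphism of the finite-dimensional space $\Mat_n(\F)$, bijective. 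In particular $T(\Lambda^{\leq k}) \subseteq \Lambda^{\leq k}$, so Theorem \ref{mainth1} shows that $T$ has the form (\ref{eq1}) or (\ref{eq2}), completing the argument.

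The main obstacle has, in effect, already been surmounted inside Theorem \ref{mainth1}; within the present statement the only delicate point is the implicit appeal to bijectivity, which the spanning argument above supplies cleanly. It is worth stressing that the inclusion $T(\Lambda^{\leq k}) \subseteq \Lambda^{\leq k}$ by itself does \emph{not} force surjectivity, as the remark following Theorem \ref{mainth1} exhibits maps with image contained in a proper subspace of $\Lambda^{\leq k}$; thus it is essential here that we are handed the full equality of sets rather than a single inclusion, and this is precisely what lets us drop the bijectivity hypothesis from the statement.
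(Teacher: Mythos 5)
Your proposal is correct and follows essentially the same route as the paper: the paper's Lemma \ref{iso} establishes bijectivity by exactly your spanning argument (the matrix units $E_{i,j}$ lie in $\Lambda^{\leq k} = T(\Lambda^{\leq k})$, hence the image of $T$ contains a basis of $\Mat_n(\F)$), after which Theorem \ref{mainth1} is invoked, and the ``if'' direction is the paper's Lemma \ref{if1} verifying that the operations in Lemma \ref{preser} preserve permanental rank. No gaps; nothing further is needed.
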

	
	For infinite $\F$, we obtain the similar classification for $\Lambda^k$-preservers.
	
	\begin{theorem}\label{mainth12}
		Let $\F$ be an infinite field with $\char (\F) \neq 2$.
		Let $1 \leq k \leq n-1$ and let $T: \Mat_{n}(\F) \to \Mat_{n}(\F)$ be a linear bijective map. Then $T$ satisfies $T(\Lambda^{k}) \subseteq \Lambda^{k}$ if and only if there exist permutations $\sigma_1, \sigma_2 \in S_n$ and diagonal matrices $D_1, D_2 \in \Mat_n(\F)$ with nonzero diagonal elements such that either (\ref{eq1}) or (\ref{eq2}) holds.
	\end{theorem}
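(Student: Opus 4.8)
The plan is to derive Theorem \ref{mainth12} from Theorem \ref{mainth1} by combining it with the fact, established earlier for infinite $\F$, that $\Lambda^{k}$ is Zariski dense in $\Lambda^{\leq k}$. Accordingly the proof would split into a routine verification of the ``if'' implication and a short topological reduction for the ``only if'' implication, the latter being where all the new input is concentrated.

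For the ``if'' direction I would check that every generator occurring in (\ref{eq1}) and (\ref{eq2}) preserves the permanental rank \emph{exactly}, not merely the bound. Left or right multiplication by a permutation matrix only reindexes the rows, respectively columns, of $A$, hence induces a bijection between the square submatrices of $A$ and those of the image that carries each submatrix to one of equal permanent, since the permanent is invariant under permutations of rows and of columns. Multiplication by a diagonal matrix with nonzero entries scales the permanent of each square submatrix by a nonzero factor, so it does not change whether that permanent vanishes. Finally $\per (B^T)=\per B$ for every square $B$, so transposition preserves the family of submatrices with nonzero permanent. Thus any $T$ of the form (\ref{eq1}) or (\ref{eq2}) satisfies $\prk(T(A))=\prk(A)$ for all $A$, and in particular maps $\Lambda^{k}$ onto itself, which settles this direction.

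For the ``only if'' direction I would start from a bijective linear $T$ with $T(\Lambda^{k})\subseteq\Lambda^{k}$ and upgrade this to the hypothesis $T(\Lambda^{\leq k})\subseteq\Lambda^{\leq k}$ of Theorem \ref{mainth1}. Being linear, $T$ is a polynomial morphism of the affine space $\Mat_{n}(\F)$ and is therefore continuous in the Zariski topology, while $\Lambda^{\leq k}$ is Zariski closed, being the common zero locus of all permanents of $(k+1)\times(k+1)$ submatrices. Hence $T^{-1}(\Lambda^{\leq k})$ is Zariski closed, and from $T(\Lambda^{k})\subseteq\Lambda^{k}\subseteq\Lambda^{\leq k}$ I would conclude $\Lambda^{k}\subseteq T^{-1}(\Lambda^{\leq k})$. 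Passing to Zariski closures and invoking the density statement $\overline{\Lambda^{k}}=\Lambda^{\leq k}$, this gives $\Lambda^{\leq k}\subseteq T^{-1}(\Lambda^{\leq k})$, i.e. $T(\Lambda^{\leq k})\subseteq\Lambda^{\leq k}$. At this point Theorem \ref{mainth1} applies verbatim, using that $T$ is bijective and $1\leq k\leq n-1$, and forces $T$ into the form (\ref{eq1}) or (\ref{eq2}).

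I expect the genuine obstacle to lie not in the reduction just described, which is purely formal, but in the density input $\overline{\Lambda^{k}}=\Lambda^{\leq k}$ on which it rests; this is precisely where the infinitude of $\F$ is needed, since over a finite field $\Lambda^{k}$ is a finite set and cannot be dense in the positive-dimensional variety $\Lambda^{\leq k}$. Proving that density amounts to showing that a generic matrix of $\Lambda^{\leq k}$ already attains permanental rank $k$, for which I would expect to rely on the classification of maximal subspaces of $\Lambda^{\leq k}$ used elsewhere in the paper. Granting it, the whole statement follows from the topological argument above, and one even obtains the stronger conclusion $T(\Lambda^{k})=\Lambda^{k}$, consistent with the exactness observed in the ``if'' direction.
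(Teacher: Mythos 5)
Your proposal is correct and follows essentially the same route as the paper: the ``if'' direction is the paper's Lemma \ref{if1}, and the ``only if'' direction is exactly the paper's reduction to Theorem \ref{mainth1} via Zariski continuity of $T$, closedness of $\Lambda^{\leq k}$, and the density $\overline{\Lambda^{k}}=\Lambda^{\leq k}$ of Lemma \ref{density}; whether one phrases the reduction through $T(\overline{\Lambda^{k}})\subseteq\overline{T(\Lambda^{k})}$ or through closedness of the preimage $T^{-1}(\Lambda^{\leq k})$ is immaterial. One side remark: the density lemma itself is proved in the paper not via the maximal-subspace classification, as you guessed, but by an elementary perturbation argument (adding $\mu E_{i,j}$ to a matrix of permanental rank $k-1$ to raise its permanental rank while staying inside a prescribed principal open set), though this does not affect your proof since you correctly treat density as an already established input.
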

	
	The key argument in the proof of Theorem \ref{mainth12} is Lemma~\ref{density}, reducing the problem to $\Lambda^{\leq k}$-preservers via the proof that $\Lambda^k$ is Zariski dense in $\Lambda^{\leq k}$.

	This paper is organized as follows. In Section \ref{sec1.5} we 
	deduce Theorems \ref{mainth} and \ref{mainth12} from Theorem \ref{mainth1}. The remainder of this paper is devoted to the proof of Theorem \ref{mainth1}. Section \ref{sec2} contains some auxiliary results from \cite{GutMeshSp} about the structure of maximal subspaces in $\Lambda^{\leq k}$. Then in Section \ref{sec3} we construct a complete weighted graph $\Theta_{n, k}$, described in Definition \ref{def:Tnk} 
	and study its automorphisms.
	Finally, in Section \ref{sec4} we provide a proof of Theorem \ref{mainth1}.
	
	This work was carried out during the stay of the second author as a visiting student at the Weizmann Institute of Science. He is grateful to U.~Bader for his hospitality during the visit and to A.~Zaitsev for useful comments on algebraic geometry.
	
	\section{Auxiliary results} \label{sec1.5}
	
	First we provide the following result which proof is straightforward.
	\begin{lemma}\label{preser}
		The permanental rank function on $\Mat_n(\F)$ is invariant w.r.t. the following matrix operations.
		
		(1) Transposition, i.e. $A \mapsto A^T$.
		
		(2) Row permutations, i.e. $A \mapsto P(\sigma) A$, where $\sigma \in S_n$.
		
		(3) Column permutations, i.e. $A \mapsto A P(\sigma)$, where $\sigma \in S_n$.
		
		(4) Row nonzero rescaling, i.e. $A \mapsto D A$, where $D = \diag(d_1, \dots, d_n) \in \Mat_n(\F)$ is a diagonal matrix with nonzero diagonal entries $d_i \neq 0$.
		
		(5) Column nonzero rescaling, i.e. $A \mapsto A D$, where $D = \diag(d_1, \dots, d_n) \in \Mat_n(\F)$ is a diagonal matrix with nonzero diagonal entries $d_i \neq 0$.
	\end{lemma}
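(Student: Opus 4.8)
The plan is to reduce all five claims to three elementary properties of the permanent of a fixed square matrix $B$: that $\per(PBQ) = \per(B)$ for permutation matrices $P, Q$, that $\per(B^T) = \per(B)$, and that multiplying a single row or column of $B$ by a scalar $c$ multiplies $\per(B)$ by $c$. Each of these follows immediately from the formula $\per(B) = \sum_{\tau} \prod_i b_{i\tau(i)}$, since permuting rows or columns merely reindexes the summands, transposition replaces each $\tau$ by $\tau^{-1}$, and scaling one line factors out of every summand. These facts make the permanent behave predictably on submatrices, which is all we need.

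The core observation is that each operation induces, for every pair $(I, J)$ of a row-index set and a column-index set of equal cardinality $m$, a correspondence between the $m \times m$ submatrix $A[I, J]$ of $A$ and an $m \times m$ submatrix of the transformed matrix under which the permanent is either unchanged or scaled by a nonzero factor. I would treat the cases in turn. For transposition, the submatrix of $A^T$ on rows $J$ and columns $I$ is exactly $A[I, J]^T$, so it has nonzero permanent iff $A[I, J]$ does, giving a size-preserving bijection between the nonzero-permanent square submatrices of $A$ and of $A^T$, whence $\prk(A^T) = \prk(A)$. For row permutation by $\sigma$, using $(P(\sigma)A)_{ij} = a_{\sigma^{-1}(i), j}$, the submatrix of $P(\sigma)A$ on $(I, J)$ agrees, up to a reordering of its rows, with $A[\sigma^{-1}(I), J]$, so the permanents coincide and the maximal size is unchanged. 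For row rescaling by $D = \diag(d_1, \dots, d_n)$ with all $d_i \neq 0$, the submatrix $(DA)[I, J]$ equals $\diag(d_i : i \in I)\, A[I, J]$, so $\per((DA)[I, J]) = \bigl(\prod_{i \in I} d_i\bigr)\per(A[I, J])$; as the scalar is nonzero, the pairs $(I, J)$ giving a nonzero permanent are literally the same for $A$ and $DA$.

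In every case the collection of index sets $(I, J)$ yielding a nonzero permanent is preserved — verbatim for the rescalings, and after the relabelling $I \mapsto \sigma^{-1}(I)$ (or $I \leftrightarrow J$ for transposition) in the other cases — so the maximum of $m = |I| = |J|$ over such pairs, which is by definition $\prk$, is unchanged. The column statements (3) and (5) then follow either by repeating the same argument on columns or by composing the row statement with part (1). There is no genuine obstacle here; the only point requiring a little care is the bookkeeping of the relabelling $I \mapsto \sigma^{-1}(I)$ in the permutation cases, so as to confirm that the claimed correspondence is a genuine size-preserving bijection on submatrices.
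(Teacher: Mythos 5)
Your proof is correct: the paper itself offers no argument for this lemma (it is stated with the remark that ``its proof is straightforward''), and your reduction to the elementary behavior of the permanent under line permutations, transposition, and single-line scaling, together with the induced size-preserving correspondence $(I,J)\mapsto(\sigma^{-1}(I),J)$ (resp.\ $(I,J)\mapsto(J,I)$) on nonzero-permanent submatrices, is exactly the standard argument the authors leave to the reader. Nothing is missing; in particular you correctly handle the only subtle point, namely that permutations relabel rather than fix the index pairs, while rescalings fix them and only scale the permanent by a nonzero factor.
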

	
	Now we can prove the ``if'' directions of Theorems \ref{mainth1}, \ref{mainth} and \ref{mainth12}.
	
	\begin{lemma}\label{if1}
		Let $\sigma_1, \sigma_2 \in S_n$ and let $D_1, D_2 \in \Mat_n(\F)$ be diagonal matrices with nonzero diagonal elements.
		Let $T: \Mat_{n}(\F) \to \Mat_{n}(\F)$ be a map defined by either
		\begin{equation*}\label{main1}
			T(A) = D_1 P(\sigma_1) A P(\sigma_2) D_2 \;\;\; \mbox{for all} \;\;\; A \in \Mat_n(\F)
		\end{equation*}
		or
		\begin{equation*}\label{main2}
			T(A) = D_1 P(\sigma_1) A^T  P(\sigma_2) D_2 \;\;\; \mbox{for all} \;\;\; A \in \Mat_n(\F).
		\end{equation*}
		Then $T$ is bijective and $T(\Lambda^{k}) = \Lambda^{k}$ for any $0 \leq k \leq n-1$. In particular, $T(\Lambda^{\leq k}) = \Lambda^{\leq k}$.
	\end{lemma}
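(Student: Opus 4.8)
The plan is to reduce the statement almost entirely to Lemma \ref{preser}, which already records that each of the five elementary operations preserves the permanental rank function, together with the obvious fact that transposition does too.

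First I would establish bijectivity. Each factor $D_1, P(\sigma_1), P(\sigma_2), D_2$ is an invertible matrix (diagonal matrices with nonzero entries are invertible, and permutation matrices are invertible with $P(\sigma)^{-1} = P(\sigma^{-1})$), so the map $A \mapsto D_1 P(\sigma_1) A P(\sigma_2) D_2$ is a composition of left- and right-multiplications by invertible matrices, hence an invertible linear map with inverse $B \mapsto P(\sigma_1)^{-1} D_1^{-1} B D_2^{-1} P(\sigma_2)^{-1}$. In the second case the map additionally postcomposes with transposition $A \mapsto A^T$, which is a linear bijection of $\Mat_n(\F)$ onto itself; a composition of bijections is a bijection, so $T$ is bijective in both cases.

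Next I would show $T(\Lambda^k) = \Lambda^k$. The key observation is that $T$ is, in each case, a composition of the operations listed in Lemma \ref{preser}: writing $T(A) = D_1\bigl(P(\sigma_1)\bigl(\bigl(\bigl(A P(\sigma_2)\bigr) D_2\bigr)\bigr)\bigr)$ (and inserting the transposition first in the second case) expresses $T$ as the composite of a column permutation, a column rescaling, a row permutation, and a row rescaling, possibly preceded by transposition. By Lemma \ref{preser} each such operation leaves $\prk$ invariant, and since a composition of permanental-rank-preserving maps preserves permanental rank, we get $\prk(T(A)) = \prk(A)$ for every $A \in \Mat_n(\F)$. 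This immediately gives $T(\Lambda^k) \subseteq \Lambda^k$ for every $0 \leq k \leq n$. For the reverse inclusion I would apply the same reasoning to $T^{-1}$: since $T^{-1}$ has exactly the same form (a product of the inverse diagonal and permutation factors, possibly with a transposition), it too preserves $\prk$, so $T^{-1}(\Lambda^k) \subseteq \Lambda^k$, whence $\Lambda^k \subseteq T(\Lambda^k)$. Combining the two inclusions yields $T(\Lambda^k) = \Lambda^k$.

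Finally, $T(\Lambda^{\leq k}) = \Lambda^{\leq k}$ follows formally, since $\Lambda^{\leq k} = \bigsqcup_{j=0}^{k} \Lambda^{j}$ is a disjoint union and $T$ maps each $\Lambda^j$ bijectively onto itself; applying $T$ to the union and using $T(\Lambda^j) = \Lambda^j$ for each $0 \leq j \leq k$ gives the claim. No step here is a genuine obstacle — the only point requiring any care is the bookkeeping that $\prk$ is invariant under the relevant compositions, which is precisely what Lemma \ref{preser} supplies.
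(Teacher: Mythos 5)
Your proposal is correct and follows essentially the same route as the paper: decompose $T$ into the elementary operations of Lemma \ref{preser} (plus transposition), conclude $\prk(T(A)) = \prk(A)$, and use invertibility of the factors for bijectivity. You are in fact slightly more careful than the paper, which jumps directly from ``$T$ preserves permanental rank'' to $T(\Lambda^k) = \Lambda^k$; your step of applying the same argument to $T^{-1}$ to get the reverse inclusion is exactly the detail that justifies that equality.
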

	\begin{proof}
		In both cases the map $T$ is given by a composition of the operations (1)-(5) from Lemma \ref{preser}, so it preserves permanental rank. In particular, $T(\Lambda^{k}) = \Lambda^{k}$. Since the matrices $D_1, D_2, P(\sigma_1), P(\sigma_2)$ are invertible, then $T$ is bijective.
	\end{proof}
	
	Let us deduce Theorem \ref{mainth} from Theorem \ref{mainth1}.
	Denote by $E_{i,j}$ the $(i,j)$th matrix unit, i.e., the matrix with 1 in $(i,j)$th position and $0$ elsewhere.
	We need the following lemma.
	
	\begin{lemma}\label{iso}
		Let $1 \leq k \leq n-1$ and let $T: \Mat_{n}(\F) \to \Mat_{n}(\F)$ be a linear map satisfying $T(\Lambda^{\leq k}) = \Lambda^{\leq k}$. Then $T$ is bijective.
	\end{lemma}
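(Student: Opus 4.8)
The goal is to show that a linear map $T$ with $T(\Lambda^{\leq k}) = \Lambda^{\leq k}$ is bijective. Since $T$ is a linear self-map of the finite-dimensional space $\Mat_n(\F)$, it suffices to prove that $T$ is injective, i.e. that $\Ker T = 0$. The plan is to argue by contradiction: suppose there is a nonzero matrix $B$ with $T(B) = 0$, and derive a contradiction by exploiting the surjectivity half of the hypothesis $T(\Lambda^{\leq k}) = \Lambda^{\leq k}$.

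First I would record the surjectivity consequence: since $T(\Lambda^{\leq k}) = \Lambda^{\leq k}$, every matrix of permanental rank at most $k$ lies in the image of some matrix in $\Lambda^{\leq k}$; in particular the zero matrix and all matrix units $E_{i,j}$ (which have $\prk = 1 \leq k$) are attained. The key observation is that $\Lambda^{\leq k}$ linearly spans $\Mat_n(\F)$ — indeed it already contains every $E_{i,j}$ — so the restriction of $T$ to the linear span of $\Lambda^{\leq k}$ is surjective onto $\Mat_n(\F)$. By a dimension count this forces $T$ itself to be surjective, hence bijective. This is the clean route, and it avoids analyzing the kernel directly: surjectivity of a linear endomorphism of a finite-dimensional space is equivalent to bijectivity.

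The one point that requires care, and which I expect to be the main obstacle, is justifying that $T$ maps $\Mat_n(\F)$ \emph{onto} $\Mat_n(\F)$ rather than merely mapping $\Lambda^{\leq k}$ onto $\Lambda^{\leq k}$; the issue is that $\Lambda^{\leq k}$ is a variety, not a subspace, so surjectivity on the set does not formally transfer to surjectivity of the linear map without an argument. I would resolve this as follows: let $W = \Im T$. Then $W$ is a linear subspace and $\Lambda^{\leq k} = T(\Lambda^{\leq k}) \subseteq W$. Since $\Lambda^{\leq k}$ contains all $n^2$ matrix units $E_{i,j}$, which form a basis of $\Mat_n(\F)$, we get $W = \Mat_n(\F)$, so $T$ is surjective and therefore bijective. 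The verification that each $E_{i,j} \in \Lambda^{\leq k}$ is immediate since $\prk(E_{i,j}) = 1 \leq k$ for $k \geq 1$, using the standing hypothesis $1 \leq k \leq n-1$.

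In summary, the proof reduces entirely to the remark that $\Lambda^{\leq k}$ spans $\Mat_n(\F)$ (because $k \geq 1$), combined with the surjectivity half of the hypothesis and the finite-dimensional fact that a surjective endomorphism is bijective. No delicate computation with permanents is needed beyond noting $\prk(E_{i,j}) = 1$.
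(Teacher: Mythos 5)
Your proof is correct and is essentially identical to the paper's own argument: both note that $\Lambda^{\leq k} = T(\Lambda^{\leq k})$ lies in the linear subspace $\Im T$, that $\Lambda^{\leq k}$ contains all matrix units $E_{i,j}$ (since $\prk(E_{i,j}) = 1 \leq k$), hence $\Im T = \Mat_n(\F)$, and that a surjective linear endomorphism of a finite-dimensional space is bijective. The initial detour through a kernel-based contradiction is unnecessary, but you correctly discard it in favor of the same clean route the paper takes.
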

	\begin{proof}
		Since $\prk(E_{i, j}) = 1$ for any $i$ and $j$, then we have $E_{i, j} \in \Lambda^{\leq k} = T(\Lambda^{\leq k})$. Therefore the image of $T$ is a linear subspace of $\Mat_n(\F)$ containing the matrices $E_{i, j}$ for all $i, j$. Since these matrices form a basis of $\Mat_n(\F)$, then $T$ is surjective and, therefore, bijective.
	\end{proof}
	Now we can deduce Theorem \ref{mainth} from Theorem \ref{mainth1}.
	\begin{proof}[Proof of Theorem \ref{mainth}]
		By Lemma \ref{if1} it is sufficient to prove ``only if'' direction.
		Lemma \ref{iso} implies that $T$ is bijective. Therefore Theorem \ref{mainth1} is applicable, which implies the result.
	\end{proof}
	
	In order to deduce Theorem \ref{mainth12} from Theorem \ref{mainth1} we need to recall the definition of the Zariski topology on the affine space $\F^m$.

	\begin{definition}
		The \emph{Zariski topology} on $\F^m$ is defined by its basis consisting of the \emph{principal open sets}
		$$D(f) = \{x \in\F^m \; | \; f(x) \neq 0\},$$
		where $f \in \F[x_1, \dots, x_m]$ is an arbitrary polynomial.
	\end{definition}
	
	Since $D(f) \cap D(g) = D(fg)$, it follows that this system is closed under finite intersections and the topology is well defined.
	Note that any polynomial (in particular, linear) map is continuous w.r.t. the Zariski topology. Indeed, the preimage of the principal open set $D(f)$ under a polynomial map $G: \F^m \to \F^m$ is precisely the principal open set $D(f \circ G)$.
	
	Now we consider the space $\Mat_n(\F)$ as an affine space $\F^{n^2}$. Then the Zariski topology on $\Mat_n(\F)$ has a basis consisting of the principal open sets
	$$D(f) = \{A \in \Mat_n(\F) \; | \; f(A) \neq 0\},$$
	where $f$ is a polynomial on matrix entries with coefficients in $\F$.

	Let $A \in \Mat_{n}(\F)$, $J_1,J_2\subseteq [n]$. By $A[J_1\vert J_2]$ we denote the $|J_1|\times |J_2|$ submatrix of $A$ lying on the intersection of the rows with the indices from $J_1$ and the columns with the indices from $J_2$. 
	We start with a weaker statement.
	\begin{lemma}\label{density2}
		Let $\F$ be an infinite field and let $1 \leq k \leq n$. Then $\Lambda^k$ is Zariski dense in $\Lambda^{k} \cup \Lambda^{k-1}$.
	\end{lemma}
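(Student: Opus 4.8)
The plan is to reduce the statement to the single inclusion $\Lambda^{k-1} \subseteq \overline{\Lambda^k}$, where the bar denotes Zariski closure in $\Mat_n(\F) \cong \F^{n^2}$. Indeed, $\Lambda^k$ is trivially contained in its own closure, so once $\Lambda^{k-1}$ is shown to lie in $\overline{\Lambda^k}$ we obtain $\Lambda^k \cup \Lambda^{k-1} \subseteq \overline{\Lambda^k}$; intersecting with $X := \Lambda^k \cup \Lambda^{k-1}$ then shows that the closure of $\Lambda^k$ in the subspace $X$ is all of $X$, which is exactly the asserted density. Thus everything comes down to producing, for each fixed $A$ with $\prk(A) = k-1$, points of $\Lambda^k$ accumulating at $A$ in the Zariski sense.

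To do this I would exhibit an explicit affine line through $A$ lying in $\Lambda^k$ away from $A$. Since $\prk(A) = k-1$, choose $I, J \subseteq [n]$ with $|I| = |J| = k-1$ and $\per A[I\vert J] \neq 0$. Because $k \le n$ we have $k-1 < n$, so we may pick a row index $i_0 \notin I$ and a column index $j_0 \notin J$. Consider the one-parameter family $A_t := A + t\,E_{i_0, j_0}$ together with the distinguished $k \times k$ submatrix on rows $I \cup \{i_0\}$ and columns $J \cup \{j_0\}$. Expanding its permanent along the single perturbed entry (permanents are affine in each matrix entry) gives $\per A_t[I\cup\{i_0\}\,\vert\,J\cup\{j_0\}] = \per A[I\cup\{i_0\}\,\vert\,J\cup\{j_0\}] + t\cdot \per A[I\vert J]$; the constant term is the permanent of a $k\times k$ submatrix of $A$, hence $0$, while the coefficient of $t$ is $\per A[I\vert J] \neq 0$. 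Thus this submatrix has nonzero permanent for every $t \neq 0$, so $\prk(A_t) \geq k$ whenever $t \neq 0$.

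The main point, and the step requiring care, is the reverse inequality $\prk(A_t) \le k$ for all $t$, i.e. that the perturbation does not overshoot. Here I would use that $\prk(A) = k-1$ forces every square submatrix of $A$ of size $\ge k$ to have zero permanent. Take any submatrix of $A_t$ of size $s \ge k+1$. If it omits position $(i_0, j_0)$ it equals a submatrix of $A$ of size $\ge k+1$, whose permanent is $0$. If it contains $(i_0,j_0)$, expand its permanent along that entry as before: both the constant term and the coefficient of $t$ are permanents of submatrices of $A$ of sizes $s \ge k+1$ and $s-1 \ge k$ respectively, hence both vanish. So every such permanent is identically zero in $t$, giving $\prk(A_t) \le k$ for all $t$, and therefore $\prk(A_t) = k$ for all $t \neq 0$.

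Finally I would conclude by a standard irreducibility argument. The line $\ell = \{A_t : t \in \F\}$ is an irreducible Zariski-closed subset of $\Mat_n(\F)$, and $\ell \setminus \{A\}$ is a nonempty (as $\F$ is infinite) open subset, hence dense in $\ell$. Since $\ell \setminus \{A\} \subseteq \Lambda^k$, we get $A \in \ell = \overline{\ell \setminus \{A\}} \subseteq \overline{\Lambda^k}$. As $A \in \Lambda^{k-1}$ was arbitrary, this proves $\Lambda^{k-1} \subseteq \overline{\Lambda^k}$ and completes the argument. The degenerate case $k=1$, where $\Lambda^{k-1} = \{0\}$ and $I = J = \emptyset$, is covered verbatim, the empty permanent being equal to $1$.
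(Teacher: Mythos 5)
Your proposal is correct and takes essentially the same approach as the paper: both perturb a matrix $A \in \Lambda^{k-1}$ along a single entry, $A + t E_{i_0, j_0}$, chosen to complement a $(k-1)\times(k-1)$ submatrix with nonzero permanent, prove $\prk(A + tE_{i_0,j_0}) = k$ for generic $t$ by expanding permanents through the perturbed entry, and then invoke the infinitude of $\F$. The only cosmetic differences are that you note the constant term of the distinguished $k\times k$ subpermanent vanishes (so every $t \neq 0$ works, where the paper merely excludes a single unspecified value of $t$), and that you phrase the density conclusion via irreducibility of the affine line rather than the paper's direct check on basic open sets $D(f)$ --- two equivalent formulations of the same argument.
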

	\begin{proof}
		It suffices to check that for any $A \in \Lambda^{k-1}$ and for any its basis open neighborhood $D(f)$ there exists $X \in \Lambda^{k}$ such that $X \in D(f)$. Let us fix any $A \in \Lambda^{k-1}$ and any polynomial $f$ satisfying $f(A) \neq 0$.
		
		Since $\prk(A) = k-1$ there exist $I,J\subseteq [n]$ with $|I| = |J| = k-1$ such that $\per(A[I \vert J]) \neq 0$. Fix any $i \notin I$ and $j \notin J$. For each $\mu \in \F$ we define  $B(\mu) = A + \mu E_{i, j} \in \Mat_n(\F)$. Consider the polynomial $p$ in one variable defined by $p(\mu) = f(B(\mu))$ for $\mu \in \F$. We have
		$$p(0) = f(B(0)) = f(A) \neq 0,$$
		so $p$ is nonzero. Therefore, since $F$ is infinite, there exist infinite number of $\mu$ such that $p(\mu) \neq 0$.
		
		Now let us consider the polynomial $q$ in one variable defined by $$q(\mu) = \per(B(\mu)[I \cup \{i\} \vert J \cup \{j\}]),$$
		note that $q$ has the degree less than or equal to $1$ as the polynomial in $\mu$ since there is only one indeterminate $\mu$ in the matrix $B(\mu)$ located in the position $(i,j)$. On the other hand, the coefficient at $\mu$ in $q(\mu)$ equals to $\per(A[I \vert J]) \neq 0$. Therefore,  the degree of $q(\mu)$ is precisely $1$ and $q(\mu) = 0$ for precisely one $\mu \in \F$.
		
		Consequently, there exists $\widehat{\mu} \in \F$ such that 
		$p(\widehat{\mu}) \neq 0$ and $q(\widehat{\mu}) \neq 0$. 
		Now we are ready to construct the matrix $X \in \Lambda^{k}$ such that $X \in D(f)$.
		Define $X = (x_{i, j}) = B(\widehat{\mu})$. We have
		$$f(X) = f(D(\widehat{\mu})) = p(\widehat{\mu}) \neq 0,$$
		so $X \in D(f)$. Also, we obtain
		$$\per(X[I \cup \{i\} \vert J \cup \{j\}]) = \per(B(\widehat{\mu})[I \cup \{i\} \vert J \cup \{j\}]) = q(\widehat{\mu}) \neq 0,$$
		so $\prk(X) \geq k$.
		
		It remains to show that $\prk(X) \leq k$. Assume the converse. Then there exist $I',J'\subseteq [n]$ with $|I'| = |J'| = k+1$ such that $\per(X[I' \vert J']) \neq 0$.  Note that $A$ and $X$ differs only in the $(i, j)$-th entry. So if $i \notin I'$ or $j \notin J'$, then $X[I' \vert J'] = A[I' \vert J']$, however, $\prk(A) = k-1$, but $\per(X[I' \vert J']) \neq 0$, which is a contradiction. Therefore, we have $i \in I'$ and $j \in J'$. Consider the decomposition of $\per(X[I' \vert J'])$ by the $i$-th row.
		\begin{equation} \label{summm}
			0 \neq \per(X[I' \vert J']) = \sum_{d \in J'} x_{i, d} \per(X[I' \setminus \{i\} \vert J' \setminus \{d\}]).
		\end{equation}
		Since $A$ and $X$ differs only in the $(i, j)$-th entry, then for all $d$ we have $$X[I' \setminus \{i\} \vert J' \setminus \{d\}] = A[I' \setminus \{i\} \vert J' \setminus \{d\}].$$ Hence
		$$\per(X[I' \setminus \{i\} \vert J' \setminus \{d\}] = \per(A[I' \setminus \{i\} \vert J' \setminus \{d\}] = 0$$
		since $\prk(A) = k-1$. Therefore each term in (\ref{summm}) is zero, so we obtain a contradiction.
		
		Consequently, we have $\prk(X) = k$ and $X \in D(f)$, which implies the result.
	\end{proof}
	
	The following lemma is a direct consequence of the previous one.
	
	\begin{lemma} \label{density}
		Let $\F$ be an infinite field and let $1 \leq k \leq n$. Then $\Lambda^k$ is Zariski dense in $\Lambda^{\leq k}$.
	\end{lemma}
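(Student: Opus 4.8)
The plan is to deduce the statement from Lemma \ref{density2} by a short closure-chasing argument, exploiting the fact that $\Lambda^{\leq k}$ is the finite union $\Lambda^0 \cup \Lambda^1 \cup \cdots \cup \Lambda^k$ together with the monotonicity and idempotence of Zariski closure. First I would reformulate density in terms of closures: for subsets $A \subseteq B$ of $\Mat_n(\F)$, the assertion that $A$ is dense in $B$ is equivalent to $B \subseteq \overline{A}$, where $\overline{A}$ denotes the Zariski closure in $\Mat_n(\F)$ (indeed the closure of $A$ in the subspace topology on $B$ is $\overline{A} \cap B$). Under this reformulation, Lemma \ref{density2} says precisely that $\Lambda^{j-1} \subseteq \overline{\Lambda^j}$ whenever $1 \leq j \leq n$, since $\Lambda^{j-1}$ is one of the two pieces of the union $\Lambda^j \cup \Lambda^{j-1}$ in whose closure $\Lambda^j$ is dense.

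Next I would chain these inclusions downward starting from $j = k$. Because $\overline{\Lambda^j}$ is a closed set containing $\Lambda^{j-1}$, minimality of the closure upgrades $\Lambda^{j-1} \subseteq \overline{\Lambda^j}$ to $\overline{\Lambda^{j-1}} \subseteq \overline{\Lambda^j}$. Applying this for $j = k, k-1, \dots, 1$ produces the nested chain $\overline{\Lambda^0} \subseteq \overline{\Lambda^1} \subseteq \cdots \subseteq \overline{\Lambda^k}$, and hence $\Lambda^j \subseteq \overline{\Lambda^j} \subseteq \overline{\Lambda^k}$ for every $0 \leq j \leq k$. Taking the union over $j$ yields $\Lambda^{\leq k} = \bigcup_{j=0}^{k} \Lambda^j \subseteq \overline{\Lambda^k}$, which is exactly the desired density.

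I do not expect a genuine obstacle here, as essentially all of the work is already contained in Lemma \ref{density2}; the remainder is only the formal downward induction on the index and the elementary monotonicity/idempotence of the closure operator. The single point to state with care is the range of validity: Lemma \ref{density2} is invoked only for indices $j$ with $1 \leq j \leq k \leq n$, so every application is legitimate, and the bottom level $\Lambda^0$ is reached after exactly $k$ steps.
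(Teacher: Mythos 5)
Your proposal is correct and follows exactly the paper's own argument: the paper likewise extracts $\Lambda^{s-1} \subseteq \overline{\Lambda^s}$ from Lemma \ref{density2}, upgrades it to $\overline{\Lambda^{s-1}} \subseteq \overline{\Lambda^s}$ by closedness of the closure, chains these inclusions down to $\overline{\Lambda^0}$, and concludes $\Lambda^{\leq k} = \Lambda^0 \cup \cdots \cup \Lambda^k \subseteq \overline{\Lambda^k}$. There is no substantive difference between the two proofs.
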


	\begin{proof} 
		Let us fix $1 \leq k \leq n$. By Lemma \ref{density2} we have that $\Lambda^s$ is dense in $\Lambda^{s-1} \cup \Lambda^{s}$ for each $1 \leq s \leq k$, which implies that $\Lambda^{s-1} \subseteq \overline{\Lambda^s}$, where by $\overline{\Lambda^s}$ we denote the closure of $\Lambda^s$ w.r.t. the Zariski topology. Since $\overline{\Lambda^s}$ is closed, this implies that $\overline{\Lambda^{s-1}} \subseteq \overline{\Lambda^s}$. We obtain
		$$\overline{\Lambda^0} \subseteq \overline{\Lambda^1} \subseteq \dots \subseteq \overline{\Lambda^{k-1}} \subseteq \overline{\Lambda^k}.$$
		Therefore we have $\Lambda^{\leq k}=\Lambda^0\cup \Lambda^1 \cup \ldots \cup \Lambda^{k-1}\cup \Lambda^k\subseteq \overline{\Lambda^k}$, which implies that $\Lambda^k$ is Zariski dense in~$\Lambda^{\leq k}$.
	\end{proof}

	Now we can deduce Theorem \ref{mainth12} from Theorem \ref{mainth1}.
	\begin{proof}[Proof of Theorem \ref{mainth12}]
		By Lemma \ref{if1} it is sufficient to prove ``only if'' direction. Let $T: \Mat_{n}(\F) \to \Mat_{n}(\F)$ be a linear bijective map satisfying $T(\Lambda^{k}) \subseteq \Lambda^{k}$. Since $T$ is linear, it is continuous w.r.t. Zariski topology.
		
		By Lemma \ref{density} $\Lambda^k$ is dense in $\Lambda^{\leq k}$, i.e., $\Lambda^{\leq k}\subseteq \overline{\Lambda^{k}}$. Since $\Lambda^{\leq k}$ is defined by a system of polynomial equations, it is Zariski closed in $\Mat_n(\F)$. Therefore, $\overline{\Lambda^{ k}}\subseteq \overline{\Lambda^{\leq k}}=\Lambda^{\leq k}$. Hence we have $\overline{\Lambda^{k}} = \Lambda^{\leq k}$, where by $\overline{\Lambda^{k}}$ we denote the closure of $\Lambda^{k}$ is Zariski topology. Therefore, by continuity of $T$ the condition $T(\Lambda^{k}) \subseteq \Lambda^{k}$ implies that 
		$$T(\Lambda^{\leq k}) = T(\overline{\Lambda^{k}}) \subseteq \overline{T(\Lambda^{k})} \subseteq \overline{\Lambda^{k}} = \Lambda^{\leq k}.$$
		So the result follows from Theorem~\ref{mainth1}.
	\end{proof}

	\begin{remark} \label{detvar}
		Note that the analog of Lemma \ref{density} for the usual rank is also true.
		For algebraically closed fields $\F$ this immediately follows from the fact that the determinantal variety is irreducible \cite[Corollary 4.13]{Kleiman}, so each of its nonempty Zariski open subsets is Zariski dense. However, it is an open question whether $\Lambda^{\leq k}$ is irreducible, and we suggest that the answer is negative. In this connection we refer the reader to \cite[Lemma 2.3]{GutSp} where a complete description of $\Lambda^{1}$ is obtained. It follows from that description that $\Lambda^{\leq 1}$ is not irreducible.
	\end{remark}
	
	\section{Maximal subspaces in $\Lambda^{\leq k}$} \label{sec2}
	
	The main goal of the remainder of this paper is to prove Theorem \ref{mainth1}.
	In this section we provide some results from \cite{GutMeshSp} about maximal subspaces in $\Lambda^{\leq k}$.
	
	By ${[n]}\choose{k}$ we denote the set of $k$-element subsets of $[n] = \{1,2,\ldots,n\}$; the number of such subsets equals to the binomial coefficient ${n}\choose{k}$. For each $S \in {{[n]}\choose{k}}$ let us define the subspaces $V_S^{row}, V_S^{col} \subset \Mat_n(\F)$ as follows.
	$$V_S^{row} = \{A = (a_{i, j}) \in \Mat_n(\F) \; | \; a_{i, j} = 0 \mbox{ if } i \notin S\},$$
	$$V_S^{col} = \{A = (a_{i, j}) \in \Mat_n(\F) \; | \; a_{i, j} = 0 \mbox{ if } j \notin S\}.$$
	In other words, $V_S^{row}$ ($V_S^{col}$) consists of matrices with all nonzero entries located in the rows (columns) with numbers from $S$.
	
	\begin{theorem}\cite[Corollary 1.6]{GutMeshSp}\label{th1}
		Let $V \subseteq \Lambda^{\leq k}$ be a vector subspace. Then $\dim V \leq kn$.
	\end{theorem}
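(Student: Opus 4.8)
The plan is to bound $\dim V$ by a two-stage argument in the spirit of Flanders' theorem for the usual rank, exploiting the multilinearity of the permanent, while noting at the outset that the bound is sharp: for $|S|=k$ the space $V_S^{row}$ of matrices supported on the rows indexed by $S$ has dimension $kn$ and lies in $\Lambda^{\leq k}$, since any $(k+1)\times(k+1)$ submatrix contains a zero row and hence has zero permanent. I would argue by induction on $k$, the base case $k=0$ being trivial as $\Lambda^{\leq 0}=\{0\}$. If $V\subseteq \Lambda^{\leq k-1}$, the inductive hypothesis gives $\dim V\leq (k-1)n\leq kn$, so I may assume there is $A_0\in V$ with $\prk(A_0)=k$. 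Choosing $I,J$ with $|I|=|J|=k$ and $\per(A_0[I\vert J])\neq 0$ and applying the row and column permutations sending $I,J$ to $[k]=\{1,\dots,k\}$ — which preserve $\Lambda^{\leq k}$ and $\dim V$ by Lemma \ref{preser} — I may assume $\per(A_0[[k]\vert[k]])\neq 0$.

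The first stage partitions each matrix into blocks $\mathrm{TL},\mathrm{TR},\mathrm{BL},\mathrm{BR}$ along the splitting $[n]=[k]\sqcup\{k+1,\dots,n\}$ of rows and columns, and shows that the projection of $V$ forgetting the bottom‑right block $\mathrm{BR}$ is injective. It suffices to show that any $B\in V$ supported on $\mathrm{BR}$ vanishes. For such $B$ and indices $i,j>k$, the only entry of $B$ inside the submatrix on rows $[k]\cup\{i\}$ and columns $[k]\cup\{j\}$ is the corner entry $b_{ij}$; expanding the permanent along the last row gives the exact identity
\[
\per\big((A_0+B)[[k]\cup\{i\}\mid[k]\cup\{j\}]\big)=\per\big(A_0[[k]\cup\{i\}\mid[k]\cup\{j\}]\big)+b_{ij}\,\per(A_0[[k]\mid[k]]).
\]
Both permanents of the full $(k+1)\times(k+1)$ submatrices vanish because $A_0,A_0+B\in V\subseteq\Lambda^{\leq k}$, while $\per(A_0[[k]\vert[k]])\neq 0$; hence $b_{ij}=0$ for all $i,j>k$, i.e. $B=0$. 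This step uses only a single linear cofactor expansion and is therefore valid over an arbitrary field. Consequently $\dim V\leq n^2-(n-k)^2=k(2n-k)$.

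The second stage, the improvement from $k(2n-k)$ to the sharp $kn$, is where I expect the main difficulty, and it is genuinely harder than its determinantal counterpart. The extremal spaces $V_{[k]}^{row}$ and $V_{[k]}^{col}$ occupy the blocks $(\mathrm{TL},\mathrm{TR})$ and $(\mathrm{TL},\mathrm{BL})$ respectively, each of dimension $kn$, so the remaining task is to show that $V$ cannot fill $\mathrm{TR}$ and $\mathrm{BL}$ simultaneously: one must lose the $k(n-k)$ dimensions of one of them. For the usual rank this is forced by the Schur‑complement factorization of $(k+1)\times(k+1)$ minors, but the permanent admits no such factorization, and the symmetry group available through Lemma \ref{preser} is only the monomial one, so $A_0$ cannot be normalized to a matrix with identity pivot block. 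I would instead try to extract, from the vanishing of those $(k+1)$-permanental minors that mix a top‑right and a bottom‑left entry, bilinear relations coupling the $\mathrm{TR}$- and $\mathrm{BL}$-projections of $V$, and then run a secondary induction on $n$ — deleting a row and a column through the pivot block — to confine $V$ to $k$ rows or to $k$ columns. The delicate points will be controlling cancellation in these permanental relations over fields that may be finite and of small order, and handling the non‑generic matrices $A_0$ whose pivot block is the only nonzero $k\times k$ minor; this combinatorial core is precisely what is carried out in \cite{GutMeshSp}.
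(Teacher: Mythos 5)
There is a genuine gap, and it is exactly the content of the theorem. Your first stage is correct: for $B\in V$ supported on the bottom-right block, the only entry of $B$ inside the submatrix on rows $[k]\cup\{i\}$ and columns $[k]\cup\{j\}$ is $b_{ij}$, so linearity of the permanent in that row gives your identity; both $(k+1)\times(k+1)$ permanents vanish because $A_0$ and $A_0+B$ lie in $V\subseteq\Lambda^{\leq k}$, and $\per(A_0[[k]\vert[k]])\neq 0$ forces $B=0$. This makes the projection forgetting the bottom-right block injective on $V$ and yields $\dim V\leq n^2-(n-k)^2=k(2n-k)$. But that bound exceeds the claimed $kn$ by $k(n-k)>0$ for every $1\leq k\leq n-1$, and your second stage --- the only part that separates this theorem from a routine Flanders-type estimate --- is not an argument but a plan: ``extract bilinear relations,'' ``run a secondary induction on $n$,'' ending with the explicit admission that the combinatorial core ``is precisely what is carried out in \cite{GutMeshSp}.'' Delegating the decisive step to the reference means the statement is not proved. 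The plan is also not correctly calibrated: the slogan that $V$ ``cannot fill $\mathrm{TR}$ and $\mathrm{BL}$ simultaneously'' (i.e., that the two projections cannot both be surjective) would shave off only one dimension, not the $k(n-k)$ you need. What would suffice is a quantitative statement such as $\dim \pi_{\mathrm{TR}}(V)+\dim \pi_{\mathrm{BL}}(V)\leq k(n-k)$ (consistent with the extremal spaces $V_{[k]}^{row}$, $V_{[k]}^{col}$, where one summand is $k(n-k)$ and the other is $0$), and nothing in your sketch exhibits a mechanism forcing such an inequality from the vanishing of permanental minors.

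For calibration: the paper you are being compared against does not prove this theorem either --- it imports it as a black box from \cite[Corollary 1.6]{GutMeshSp}, and all of its own work (Theorem \ref{th2}, Lemma \ref{cor1}, the graph $\Theta_{n,k}$) takes place downstream of that citation. So there is no internal proof to measure your approach against; judged as a self-contained proof, your attempt establishes only the weaker bound $k(2n-k)$.
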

	
	\begin{theorem}\cite[Theorem 1.7]{GutMeshSp}\label{th2}
		Let $V \subseteq \Lambda^{\leq k}$ be a vector subspace. Then $\dim V = kn$ if and only if $V = V_S^{row}$ or $V = V_S^{col}$ for some $S \in {{[n]}\choose{k}}$.
	\end{theorem}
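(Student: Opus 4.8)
The plan is to prove the two implications separately, with the forward (``if'') direction being routine and the converse carrying the weight. For the ``if'' direction, fix a $k$-element subset $S \subseteq [n]$ and consider $V = V_S^{row}$. Its dimension is immediate: the entries $a_{ij}$ with $i \in S$ are free while the rest are forced to zero, giving exactly $kn$ parameters. To see $V_S^{row} \subseteq \Lambda^{\leq k}$, take any $A \in V_S^{row}$ and any index sets $I, J$ with $|I| = |J| = k+1$. Since $|S| = k$, there is an index $i^* \in I \setminus S$, and by definition the entire $i^*$-th row of $A$ vanishes; hence the corresponding row of $A[I \vert J]$ is zero and $\per(A[I \vert J]) = 0$. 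Thus $\prk(A) \leq k$. The case $V = V_S^{col}$ follows by applying the transpose and using Lemma \ref{preser}(1).

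For the ``only if'' direction, suppose $V \subseteq \Lambda^{\leq k}$ with $\dim V = kn$. First I would show that the maximal permanental rank attained on $V$ is exactly $k$: if every $A \in V$ had $\prk(A) \leq k-1$, then $V \subseteq \Lambda^{\leq k-1}$ and Theorem \ref{th1} would force $\dim V \leq (k-1)n < kn$, a contradiction. Hence there is $A_0 \in V$ with $\per(A_0[I_0 \vert J_0]) \neq 0$ for some $I_0, J_0$ of size $k$. Since row and column permutations preserve both $\Lambda^{\leq k}$ and the family of standard subspaces (Lemma \ref{preser}), I may normalize $I_0 = J_0 = \{1, \dots, k\}$, so that $A_0$ carries a nonzero permanent on its top-left $k \times k$ block.

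The heart of the argument is to exploit that $A_0 + tA \in \Lambda^{\leq k}$ for every $A \in V$ and every scalar $t$: expanding $\per\bigl((A_0 + tA)[I \vert J]\bigr) = 0$ as a polynomial in $t$ forces every coefficient to vanish, which amounts to the vanishing on $V$ of all mixed (polarized) permanents of any $k+1$ elements on any $(k+1)\times(k+1)$ block. Taking $k$ of the arguments equal to $A_0$ and one equal to an arbitrary $A \in V$, and choosing blocks $I = \{1,\dots,k\}\cup\{p\}$, $J = \{1,\dots,k\}\cup\{q\}$ with $p, q > k$, I would apply the Laplace-type identity
\[
\per(M) = w \, \per(B) + \sum_{i, j = 1}^{k} u_i \, v_j \, \per\bigl(B_{\widehat{i}\,\widehat{j}}\bigr),
\]
valid for a $k\times k$ block $B$ bordered by $u, v, w$, where $B_{\widehat{i}\,\widehat{j}}$ denotes $B$ with row $i$ and column $j$ deleted. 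Combined with $\per(B) \neq 0$, this converts the vanishing conditions into linear constraints on the ``outer'' entries $a_{pj}, a_{iq}, a_{pq}$ of $A$. Propagating these over all $p, q$ should pin down that every $A \in V$ is supported on the rows $\{1,\dots,k\}$, or else on the columns $\{1,\dots,k\}$; the dimension equality $\dim V = kn$ together with the ``if'' direction then yields $V = V_S^{row}$ or $V = V_S^{col}$.

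The main obstacle I anticipate is the combinatorial bookkeeping in this last step: the mixed-permanent relations couple the outer row entries and outer column entries simultaneously, so one must rule out ``mixed'' extremal configurations (partly row-type, partly column-type) and show that a single global alternative is forced across all of $V$. A secondary difficulty is that a naive genericity argument is unavailable over finite fields, so the selection of the nonzero block and the subsequent linear algebra must be carried out without appealing to a generic point.
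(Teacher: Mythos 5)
First, a point of reference: this paper does not prove the statement at all --- it is imported verbatim as \cite[Theorem 1.7]{GutMeshSp}, and that external proof is a research-level argument in its own right. So your proposal can only be judged on its own merits. Your ``if'' direction is correct and complete (any $(k+1)\times(k+1)$ submatrix of a matrix in $V_S^{row}$ has a zero row, and the dimension count is immediate), and so is your opening reduction: by Theorem \ref{th1} applied with $k-1$ in place of $k$, a subspace of dimension $kn$ inside $\Lambda^{\leq k}$ must contain some $A_0$ with $\prk(A_0)=k$, which may be assumed, after row and column permutations, to have a nonzero permanent on its top-left $k\times k$ block.

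The ``only if'' direction, however, has two genuine gaps. The first is the polarization step: from $\per\bigl((A_0+tA)[I\vert J]\bigr)=0$ for all $t\in\F$ you conclude that every coefficient of this polynomial of degree at most $k+1$ vanishes. That inference needs $|\F|\geq k+2$; over $\F_3$ (which is allowed here, since only $\char (\F) \neq 2$ is assumed) a polynomial such as $t^3-t$ vanishes identically on the field without being the zero polynomial, so for $k\geq 2$ the mixed-permanent identities you rely on simply do not follow. Since the theorem is asserted for arbitrary fields of characteristic different from two, this is not the ``secondary difficulty'' you describe but a failure of the method's first step in the claimed generality. The second and larger gap is the one you yourself flag as ``the main obstacle'': the passage from the vanishing constraints to the conclusion that every $A\in V$ is supported in rows $\{1,\dots,k\}$ or in columns $\{1,\dots,k\}$, uniformly over $V$, is precisely the content of the theorem, and it is not carried out. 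Moreover the constraints are weaker than your sketch suggests: since the permanent is multilinear in rows, the coefficient of $t$ in $\per\bigl((A_0+tA)[I\vert J]\bigr)$ is a sum over all rows $r$ of $I$ of permanents in which row $r$ of $A_0[I\vert J]$ is replaced by the corresponding row of $A[I\vert J]$, so it couples the outer entries $a_{pj}$, $a_{iq}$, $a_{pq}$ with the inner $k\times k$ block of $A$; and, unlike the rank case treated by Flanders, you cannot normalize the block of $A_0$ to the identity, because only permutations, nonzero scalings and transposition preserve permanental rank (Lemma \ref{preser}), so all constraints carry $A_0$-dependent permanental minors $\per(B_{\widehat{i}\,\widehat{j}})$. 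Ruling out mixed row/column configurations under these weaker, $A_0$-dependent relations is where the cited proof does its real work, and the proposal offers no mechanism for it.
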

	
	By Theorem \ref{th1} the dimension of subspaces in $\Lambda^{\leq k}$ is bounded from above by $nk$. So the following definition is natural.
	\begin{definition}
		A subspace $V \subseteq \Mat_n(\F)$ is called \textit{maximal} if $V \subseteq \Lambda^{\leq k}$ and $\dim V = kn$.
	\end{definition}
	
	We obtain the following result, which will be the main tool the next section.
	
	\begin{lemma}\label{cor1}
		Let $1 \leq k \leq n-1$ and let $T: \Mat_{n}(\F) \to \Mat_{n}(\F)$ be a bijective linear map satisfying $T(\Lambda^{\leq k}) \subseteq \Lambda^{\leq k}$.
		
		(1) A subspace $V$ is maximal if and only if $V = V_S^{row}$ or $V = V_S^{col}$ for some $S \in {{[n]}\choose{k}}$.
		
		(2) If $V$ is maximal then $T(V)$ is maximal.
	\end{lemma}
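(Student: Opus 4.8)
The plan is to dispatch the two parts separately, drawing essentially all of the substantive content of part (1) from Theorem~\ref{th2} and reducing part (2) to bijectivity together with the preservation hypothesis on $T$.

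For part (1), the ``only if'' direction is immediate from the definition: if $V$ is maximal then $V \subseteq \Lambda^{\leq k}$ and $\dim V = kn$, so Theorem~\ref{th2} applies verbatim and yields $V = V_S^{row}$ or $V = V_S^{col}$ for some $S \in {{[n]}\choose{k}}$. For the ``if'' direction I would verify directly that each candidate subspace is maximal, since Theorem~\ref{th2} presupposes the containment in $\Lambda^{\leq k}$. The dimension count is clear: a matrix in $V_S^{row}$ has $k$ free rows of $n$ entries each, so $\dim V_S^{row} = kn$, and symmetrically $\dim V_S^{col} = kn$. The only point needing an argument is the containment $V_S^{row} \subseteq \Lambda^{\leq k}$: any $A \in V_S^{row}$ has nonzero entries only in the $k$ rows indexed by $S$, so every $(k+1)\times(k+1)$ submatrix of $A$ necessarily contains at least one row that is entirely zero; such a submatrix has zero permanent, whence $\prk(A) \leq k$. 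Applying the same reasoning to columns gives $V_S^{col} \subseteq \Lambda^{\leq k}$. This shows both families consist of maximal subspaces and completes part (1).

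For part (2), suppose $V$ is maximal, so that $V \subseteq \Lambda^{\leq k}$ and $\dim V = kn$. Applying $T$ and invoking the hypothesis $T(\Lambda^{\leq k}) \subseteq \Lambda^{\leq k}$, I obtain $T(V) \subseteq T(\Lambda^{\leq k}) \subseteq \Lambda^{\leq k}$. Since $T$ is a bijective linear map it preserves dimensions, so $\dim T(V) = \dim V = kn$. Thus $T(V)$ is a vector subspace of $\Lambda^{\leq k}$ of dimension $kn$, i.e.\ $T(V)$ is maximal.

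There is no serious obstacle here; the lemma is a bookkeeping consequence of Theorem~\ref{th2} together with the two defining properties of $T$, namely preservation of $\Lambda^{\leq k}$ and bijectivity (the latter being exactly what guarantees $\dim T(V) = \dim V$). The only point demanding a small amount of care is the ``if'' direction of part~(1): one cannot simply quote Theorem~\ref{th2} to conclude $\dim V_S^{row} = kn$, because that theorem assumes $V \subseteq \Lambda^{\leq k}$ as a hypothesis, so I would emphasize the independent observation that a fully zero row (or column) in any oversized submatrix forces the permanent to vanish, which is what pins $\prk$ down to at most $k$.
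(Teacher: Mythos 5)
Your proof is correct and takes essentially the same approach as the paper: part (1) is deduced from Theorem~\ref{th2} and part (2) from bijectivity (preserving dimension) combined with the hypothesis $T(\Lambda^{\leq k}) \subseteq \Lambda^{\leq k}$. The only difference is that you explicitly verify $\dim V_S^{row} = \dim V_S^{col} = kn$ and the containments $V_S^{row}, V_S^{col} \subseteq \Lambda^{\leq k}$ via the zero-row (zero-column) observation for $(k+1)\times(k+1)$ submatrices, a detail the paper leaves implicit in stating that (1) ``immediately follows'' from Theorem~\ref{th2}.
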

	\begin{proof}
		Statement (1) immediately follows from Theorem \ref{th2}. In order to prove (2), consider a maximal subspace $V \subseteq \Lambda^{\leq k}$, i.e. $\dim V = kn$.
		Since $T(\Lambda^{\leq k}) \subseteq \Lambda^{\leq k}$, then $T(V) \subseteq \Lambda^{\leq k}$. The bijectivity of $T$ implies that $\dim T(V) = \dim V$, so $\dim T(V) = \dim V = kn$ and hence $T(V)$ is maximal.
	\end{proof}

	\section{Maximal subspace graph}\label{sec3}
	
	In this section we define and study a graph $\Theta_{n, k}$ called \textit{maximal subspace graph} which plays the central role in the proof of the main result.
	
	\begin{definition} \label{def:Tnk}
		The \textit{maximal subspace graph} $\Theta_{n, k}$ is a complete weighted unoriented graph, whose vertices are maximal subspaces $V \subseteq \Mat_n(\F)$ and the weight $w(U, V) = w(V, U)$ of the edge between $U$ and $V$ equals to $\dim(U \cap V)$.
	\end{definition}
	
	We have the following result.
	\begin{lemma}\label{lem1}
		Let $1 \leq k \leq n-1$.
		
		(1) The vertices of $\Theta_{n, k}$ are the subspaces $V_S^{row}$ and $V_S^{col}$, where $S \in {{[n]}\choose{k}}$. In particular, the number of vertices of $\Theta_{n, k}$ equals $2{{n}\choose{k}}$.
		
		(2) Let $S, S' \in {{[n]}\choose{k}}$. Then
		$$w(V_{S}^{row}, V_{S'}^{col}) = k^2, \;\;\;\; w(V_{S}^{row}, V_{S'}^{row}) = w(V_{S}^{col}, V_{S'}^{col}) = n|S \cap S'|.$$
	\end{lemma}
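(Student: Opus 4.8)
The plan is to read off both statements from the explicit description of maximal subspaces in Lemma \ref{cor1}(1) together with the observation that each $V_S^{\mathrm{row}}$ and $V_S^{\mathrm{col}}$ is a coordinate subspace, that is, a span of matrix units, so that every dimension I need is just the cardinality of an index set.

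For part (1), I would first invoke Lemma \ref{cor1}(1): the maximal subspaces, which by Definition \ref{def:Tnk} are exactly the vertices of $\Theta_{n,k}$, are precisely the $V_S^{\mathrm{row}}$ and $V_S^{\mathrm{col}}$ with $S \in \binom{[n]}{k}$. To justify the count $2\binom{n}{k}$ I would check that these subspaces are pairwise distinct. Writing $V_S^{\mathrm{row}} = \operatorname{span}\{E_{i,j} \mid i \in S,\ j \in [n]\}$ and $V_S^{\mathrm{col}} = \operatorname{span}\{E_{i,j} \mid i \in [n],\ j \in S\}$, distinctness inside each family is clear because the support determines $S$: if $i \in S \setminus S'$ then $E_{i,1} \in V_S^{\mathrm{row}} \setminus V_{S'}^{\mathrm{row}}$, and symmetrically for columns. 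A row subspace never coincides with a column subspace: using $k \le n-1$ pick $i \notin S$ and using $k \ge 1$ pick $j \in S'$; then $E_{i,j} \in V_{S'}^{\mathrm{col}} \setminus V_S^{\mathrm{row}}$.

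For part (2), I would use the elementary fact that if two subspaces are each spanned by a set of matrix units, indexed by $P_1, P_2 \subseteq [n] \times [n]$ respectively, then $A$ lies in their intersection exactly when $a_{i,j} = 0$ off $P_1 \cap P_2$, so the intersection is spanned by the matrix units indexed by $P_1 \cap P_2$ and has dimension $|P_1 \cap P_2|$. Applying this to the three relevant pairs of index sets gives $(S \times [n]) \cap ([n] \times S') = S \times S'$, hence $w(V_S^{\mathrm{row}}, V_{S'}^{\mathrm{col}}) = |S|\,|S'| = k^2$; then $(S \times [n]) \cap (S' \times [n]) = (S \cap S') \times [n]$ and $([n] \times S) \cap ([n] \times S') = [n] \times (S \cap S')$, both of cardinality $n|S \cap S'|$, which yields $w(V_S^{\mathrm{row}}, V_{S'}^{\mathrm{row}}) = w(V_S^{\mathrm{col}}, V_{S'}^{\mathrm{col}}) = n|S \cap S'|$.

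The argument is essentially bookkeeping and I anticipate no substantial obstacle; the only point actually requiring the hypothesis $1 \le k \le n-1$ is the non-coincidence of the row and column families in part (1), and the displayed choice of $E_{i,j}$ shows precisely where both inequalities $k \ge 1$ and $k \le n-1$ are needed (for $k = n$ one would have $V_{[n]}^{\mathrm{row}} = V_{[n]}^{\mathrm{col}} = \Mat_n(\F)$).
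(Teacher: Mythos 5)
Your proposal is correct and follows essentially the same route as the paper: part (1) is read off from Lemma \ref{cor1}(1), and part (2) is the direct computation of intersections of spans of matrix units, giving $|S|\cdot|S'| = k^2$ and $n|S\cap S'|$ respectively. The only difference is that you explicitly verify the $2\binom{n}{k}$ subspaces are pairwise distinct (pinpointing where $1 \le k \le n-1$ is used), a bookkeeping step the paper leaves implicit.
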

	\begin{proof}
		Claim (1) immediately follows from Lemma \ref{cor1} (1), so let us check  claim (2). We have
		$$\dim(V_{S}^{row} \cap V_{S'}^{col}) = \dim (\langle E_{i, j} \; | \; i \in S, j \in S'\rangle_\F) = |S|\cdot |S'| = k^2,$$
		$$\dim(V_{S}^{row} \cap V_{S'}^{row}) = \dim (\langle E_{i, j} \; | \; i \in S \cap S'\rangle_\F) = n|S \cap S'|,$$
		$$\dim(V_{S}^{col} \cap V_{S'}^{col}) = \dim (\langle E_{i, j} \; | \; j \in S \cap S'\rangle_\F) = n|S \cap S'|.$$
		This concludes the proof.
	\end{proof}
	
	We say that a map $\Phi$ defined on the vertex set of $\Theta_{n, k}$ is an automorphism if $\Phi$ is  a bijection and preserves weights, i.e. $w(\Phi(u), \Phi(v)) = w(u, v)$ for any pair of vertices $u, v$ of $\Theta_{n, k}$.
	The key observation is as follows.
	\begin{lemma}\label{l2}
		Let $1 \leq k \leq n-1$ and let $T: \Mat_{n}(\F) \to \Mat_{n}(\F)$ be a linear bijective map satisfying $T(\Lambda^{\leq k}) \subseteq \Lambda^{\leq k}$.
		Then $T$ induces an automorphism $T_*$ of $\Theta_{n, k}$ given by $T_*: V \mapsto T(V)$, where $V$ is a maximal subspace representing a vertex of $\Theta_{n, k}$.
	\end{lemma}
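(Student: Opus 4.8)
The plan is to check the three properties that make $T_*$ an automorphism of the weighted graph $\Theta_{n,k}$: that it is well defined on vertices, that it is a bijection of the vertex set, and that it preserves all edge weights.

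First, for well-definedness I would invoke Lemma~\ref{cor1}(2) directly. The vertices of $\Theta_{n,k}$ are exactly the maximal subspaces, and Lemma~\ref{cor1}(2) asserts that if $V$ is maximal then $T(V)$ is maximal. Hence the assignment $T_* \colon V \mapsto T(V)$ sends the vertex set of $\Theta_{n,k}$ into itself, so it is a well-defined self-map of the vertex set.

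Next I would establish that $T_*$ is a bijection. It is injective: since $T$ is bijective, the equality $T(U) = T(V)$ for maximal subspaces $U,V$ forces $U = T^{-1}(T(U)) = T^{-1}(T(V)) = V$. Because the vertex set of $\Theta_{n,k}$ is finite, having exactly $2\binom{n}{k}$ elements by Lemma~\ref{lem1}(1), an injective self-map of this set is automatically surjective. Thus $T_*$ is a bijection of the vertex set. (Note that this route avoids having to argue separately that $T^{-1}$ preserves $\Lambda^{\leq k}$.)

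Finally, for weight preservation, recall that $w(U,V) = \dim(U \cap V)$. The key identity is that a bijective, hence injective, linear map commutes with intersection of subspaces: $T(U \cap V) = T(U) \cap T(V)$. The inclusion $\subseteq$ is automatic, and for $\supseteq$ any $y \in T(U) \cap T(V)$ can be written $y = T(u) = T(v)$ with $u \in U$ and $v \in V$, whence injectivity gives $u = v \in U \cap V$ and $y \in T(U \cap V)$. Since an injective linear map preserves dimension, we conclude $\dim\bigl(T(U) \cap T(V)\bigr) = \dim T(U \cap V) = \dim(U \cap V)$, that is, $w\bigl(T_*(U), T_*(V)\bigr) = w(U,V)$ for every pair of vertices. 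Combining the three properties shows $T_*$ is an automorphism of $\Theta_{n,k}$.

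The substantive obstacle has in fact already been cleared upstream: it is Lemma~\ref{cor1}(2), guaranteeing that $T$ carries maximal subspaces to maximal subspaces, which in turn rests on the dimension classification in Theorem~\ref{th2}. Once that input is available, the remaining steps are elementary facts about bijective linear maps acting on subspaces and present no real difficulty.
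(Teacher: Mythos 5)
Your proof is correct and follows essentially the same route as the paper: well-definedness from Lemma~\ref{cor1}(2), bijectivity of $T_*$ from that of $T$, and weight preservation via the identity $T(U \cap V) = T(U) \cap T(V)$ together with dimension invariance. In fact your treatment of surjectivity is slightly more careful than the paper's terse ``it is a bijection since $T$ is a bijection'': by invoking finiteness of the vertex set you correctly sidestep the issue that $T^{-1}$ is not known a priori to preserve $\Lambda^{\leq k}$.
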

	\begin{proof}
		Lemma \ref{cor1} implies that maximal subspaces are mapped to maximal subspaces, so $T_*$ is a well defined map on vertices of $\Theta_{n, k}$. Moreover, it is a bijection since $T$ is a bijection, and we have that $T(U \cap V) = T(U) \cap T(V)$ for any subspaces $U$ and $V$, so 
		$$w(T(U), T(V)) = \dim(T(U) \cap T(V)) = \dim(U \cap V) = w(U, V).$$
		Therefore $T_*$ preserves the weights of edges.
	\end{proof}
	
	Denote by $\Theta_{n, k}^{row}$ the subgraph of $\Theta_{n, k}$ formed by the vertices $V_S^{row}$ for all $S \in {{[n]}\choose{k}}$ and all edges between them (with the same weights). Similarly we define the subgraph $\Theta_{n, k}^{col}$. The main result of this section is as follows.
	
	\begin{theorem} \label{autom}
		Let $1 \leq k \leq n$ such that $(k, n) \neq (2, 4)$ and let $\Phi$ be an automorphism of $\Theta_{n, k}$. Then either $\Phi(\Theta_{n, k}^{row}) = \Theta_{n, k}^{row}$ and $\Phi(\Theta_{n, k}^{col}) = \Theta_{n, k}^{col}$ or $\Phi(\Theta_{n, k}^{row}) = \Theta_{n, k}^{col}$ and $\Phi(\Theta_{n, k}^{col}) = \Theta_{n, k}^{row}$.
	\end{theorem}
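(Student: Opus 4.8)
The plan is to recover the partition of the vertices into $\Theta_{n,k}^{row}$ and $\Theta_{n,k}^{col}$ from the weight function alone, so that any weight-preserving bijection $\Phi$ is forced to respect it. By Lemma \ref{lem1} the only weights occurring are $k^2$ on the \emph{cross} edges (joining a row-vertex to a column-vertex) and $n\,|S\cap S'|$ on the \emph{same-type} (row-row or column-column) edges; over distinct vertices the largest same-type weight is $(k-1)n$, attained exactly when $|S\cap S'|=k-1$. The crucial point is that the global maximum weight $M=\max\{k^2,(k-1)n\}$ is attained by only one of the two edge types: the equality $(k-1)n=k^2$ forces $(k-1)\mid k^2$, hence $(k-1)\mid 1$, i.e.\ $k=2$ and $n=4$, which is exactly the excluded pair. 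Thus for $(k,n)\neq(2,4)$ we obtain a strict inequality and a clean dichotomy.

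Suppose first that $(k-1)n>k^2$, so $M=(k-1)n$ and no cross edge has maximum weight. Then the subgraph $\Gamma\subseteq\Theta_{n,k}$ of maximum-weight edges consists only of same-type edges joining $S,S'$ with $|S\cap S'|=k-1$; restricted to either part this is the Johnson graph $J(n,k)$, which is connected for $1\leq k\leq n-1$. Since $\Gamma$ has no edges between the two parts, it has exactly two connected components, namely the set of row-vertices and the set of column-vertices. As $\Phi$ preserves all weights it is an automorphism of $\Gamma$ and therefore permutes its two connected components, which is precisely the assertion.

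Suppose instead that $(k-1)n<k^2$, so $M=k^2$ and the maximum-weight edges are exactly the cross edges. These form the complete bipartite graph $K_{m,m}$ with $m=\binom{n}{k}\geq2$, which is connected and has a unique bipartition, whose classes are the row-vertices and the column-vertices. Again $\Phi$ preserves the maximum-weight subgraph and hence maps this bipartition to itself, giving the assertion. (For the degenerate value $k=n$ the graph has a single vertex and the statement is trivial.)

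The heart of the matter, and the step I expect to require the most care, is choosing the right weight-theoretic invariant. One is tempted to identify the cross edges as those of weight $k^2$, but in general some same-type edges share this value (precisely when $n\mid k^2$ with $k+1\leq n\leq k^2$), so $k^2$ is not an intrinsic marker. Passing to the \emph{maximum} weight circumvents this, and the elementary divisibility argument showing that $(k-1)n=k^2$ holds only at $(k,n)=(2,4)$ is exactly what makes the two cases exhaustive away from the excluded pair. The remaining ingredients — connectivity of $J(n,k)$ and uniqueness of the bipartition of $K_{m,m}$ — are standard.
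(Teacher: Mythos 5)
Your proof is correct, and it takes a mildly different route from the paper's. The paper also passes to an unweighted subgraph singled out by the weight function, but it uses the edges of weight \emph{exactly} $n(k-1)$ rather than of maximal weight: the same divisibility argument you give shows $n(k-1)\neq k^2$ away from $(k,n)=(2,4)$, so that subgraph contains no cross edges, and connectivity of the Johnson graph $J(n,k)$ (which the paper proves by hand via the one-element-swap path) shows its two connected components are exactly the row vertices and the column vertices; an automorphism must permute these components. Thus your Case 1, where $(k-1)n>k^2$, is essentially the paper's argument verbatim, while your Case 2, where $(k-1)n<k^2$ (this covers $k=1$, $k=n-1$, $k=n$, and $(k,n)=(2,3)$), is genuinely different: there you avoid Johnson connectivity altogether and instead use that the maximum-weight edges form $K_{m,m}$, whose bipartition is unique because it is connected and bipartite. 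The trade-off: the paper's choice of marker value yields a uniform, case-free proof; your maximum-weight marker is arguably the more self-evidently canonical invariant, but it forces the dichotomy. Both proofs rest on exactly the same arithmetic fact, which is the only place the hypothesis $(k,n)\neq(2,4)$ enters.

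One small inaccuracy in a side remark, harmless for the proof: the precise condition for a same-type edge to have weight $k^2$ is $n\mid k^2$ together with $k^2/n\leq k-1$, i.e., $n\geq k^2/(k-1)$; your stated range $k+1\leq n\leq k^2$ is slightly off, since for instance $n=k+1$ can never divide $k^2$ (as $k^2\equiv 1\pmod{k+1}$).
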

	\begin{proof}
		Define the subgraph $\widehat{\Theta}_{n, k} \subseteq \Theta_{n, k}$ as follows. The vertex set of $\widehat{\Theta}_{n, k}$ coincides with the vertex set of $\Theta_{n, k}$, and the edge $(u, v)$ belongs to $\widehat{\Theta}_{n, k}$ if and only if $w(u, v) = n(k-1)$. Note that $n(k-1)$ is the maximal possible weight on edge between two vertices in $\Theta_{n, k}^{row}$ or $\Theta_{n, k}^{col}$.
		We consider $\widehat{\Theta}_{n, k}$ as a graph without weights on edges.
		
		Let $\widehat{\Theta}_{n, k}^{row}$ be the subgraph of $\widehat{\Theta}_{n, k}$ formed by the vertices $V_S^{row}$ for all $S \in {{[n]}\choose{k}}$ and all edges between them. Similarly we define the subgraph $\widehat{\Theta}_{n, k}^{col}$.
		Let $\Phi$ be an automorphism of $\Theta_{n, k}$. We claim that $\Phi$ is also an automorphism of $\widehat{\Theta}_{n, k}$. This follows from the fact the $\Phi$ preserves the set of edges with weight $n(k-1)$. Therefore to prove the theorem, it is suffices to show that either $\Phi(\widehat{\Theta}_{n, k}^{row}) = \widehat{\Theta}_{n, k}^{row}$ and $\Phi(\widehat{\Theta}_{n, k}^{col}) = \widehat{\Theta}_{n, k}^{col}$ or $\Phi(\widehat{\Theta}_{n, k}^{row}) = \widehat{\Theta}_{n, k}^{col}$ and $\Phi(\widehat{\Theta}_{n, k}^{col}) = \widehat{\Theta}_{n, k}^{row}$.

		We claim that in this case $\widehat{\Theta}_{n, k}$ has precisely two connected components: $\widehat{\Theta}_{n, k}^{row}$ and $\widehat{\Theta}_{n, k}^{col}$. First let us show that there are no edges between $\widehat{\Theta}_{n, k}^{row}$ and $\widehat{\Theta}_{n, k}^{col}$. Since all these edges in $\Theta_{n, k}$ have weight $k^2$, we need to check that $k^2 \neq n(k-1)$. For $k > 2$ this is true because $k^2$ and $k-1$ are coprime (and $k-1 \neq 1$). For $k = 2$ it is equivalent to $n \neq 4$, and we have the condition $(k, n) \neq (2, 4)$. For $k = 1$ it is straightforward.
		
		Finally, let us check that $\widehat{\Theta}_{n, k}^{row}$ and $\widehat{\Theta}_{n, k}^{col}$ are connected. Since these subgraphs are isomorphic, we prove only for $\widehat{\Theta}_{n, k}^{row}$. Let $V_{S}^{row}$ and $V_{S'}^{row}$ be any two vertices of $\widehat{\Theta}_{n, k}^{row}$, where $S, S' \in {{[n]}\choose{k}}$. Let $S \setminus S' = \{i_1, \dots, i_m\}$ and $S' \setminus S = \{j_1, \dots, j_m\}$. Denote the sequence of subsets $S_t \in {{[n]}\choose{k}}$ for $t = 0, 1, \dots, m$ as follows.
		$$S_t = (S \setminus \{i_1, \dots, i_t\}) \cup \{j_1, \dots, j_t\}.$$
		Note that $S_0 = S$ and $S_m = S'$. By construction $|S_t \cap S_{t+1}| = k-1$ for all $t$, so $w(V_{S_t}^{row}, V_{S_{t+1}}^{row}) = n(k-1)$ and the vertices $V_{S_t}^{row}$, $V_{S_{t+1}}^{row}$ are connected by an edge in $\widehat{\Theta}_{n, k}^{row}$. This defines a path between $V_{S}^{row}$ and $V_{S'}^{row}$ in $\widehat{\Theta}_{n, k}^{row}$. Therefore, this subgraph is connected. Since $\Phi$ is an automorphism, it preserves connected components. This concludes the proof.
	\end{proof}

	\section{Main results} \label{sec4}
	The main goal of this section is to give a proof of Theorem \ref{mainth1}.
	Recall that the Hadamard (or entrywise) product of two matrices $A = (a_{i, j}), B = (b_{i, j}) \in \Mat_n(\F)$ is the matrix $(c_{i, j}) = C = A \circ B \in \Mat_n(\F)$ defined by $c_{i, j} = a_{i, j}b_{i, j}$. Denote by $R_{i}=\{\sum _{j=1}^n a_{ij}E_{ij}\vert a_{ij}\in \F\} \subseteq \Mat_{m\times n}(\F)$ the set of matrices  with all nonzero entries located in the row  $i$, and by  $C_{j}=\{\sum _{i=1}^m a_{ij}E_{ij}\vert a_{ij}\in \F\} \subseteq \Mat_{m\times n}(\F)$ the set of matrices with all   nonzero entries located in the column  $j$.
	
	The first step is the following lemma.
	
	\begin{lemma}\label{step0}
		Let $1 \leq k \leq n-1$ and let $T: \Mat_{n}(\F) \to \Mat_{n}(\F)$ be a bijective linear map satisfying $T(\Lambda^{\leq k}) \subseteq \Lambda^{\leq k}$. Then there exist permutations $\sigma_1, \sigma_2 \in S_n$ such that either
		\begin{equation}\label{eq01}
			T(R_i) = R_{\sigma_1(i)}, \;\; T(C_i) = C_{\sigma_2(i)} \;\;\; \mbox{for all} \;\;\; i \in [n],
		\end{equation}
		or
		\begin{equation}\label{eq02}
			T(R_i) = C_{\sigma_2(i)}, \;\; T(C_i) = R_{\sigma_1(i)} \;\;\; \mbox{for all} \;\;\; i \in [n].
		\end{equation}
	\end{lemma}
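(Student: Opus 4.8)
The plan is to transfer the combinatorial dichotomy of Theorem \ref{autom} back to the individual rows $R_i$ and columns $C_j$. The engine is the observation that rows and columns are intersections of maximal subspaces: since $k \le n-1$, I would record the identities
$$R_i = \bigcap_{\substack{S \in \binom{[n]}{k} \\ i \in S}} V_S^{row}, \qquad C_j = \bigcap_{\substack{S \in \binom{[n]}{k} \\ j \in S}} V_S^{col}.$$
The inclusion $\subseteq$ is immediate, and for $\supseteq$ any row $j \neq i$ can be excluded by choosing a $k$-subset $S \ni i$ with $j \notin S$, which exists because $[n]\setminus\{j\}$ has $n-1 \ge k$ elements and contains $i$. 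More generally, I would use that for any family $\{S_\alpha\}$ of $k$-subsets the intersection $\bigcap_\alpha V_{S_\alpha}^{row}$ is exactly the space of matrices whose nonzero rows lie in $\bigcap_\alpha S_\alpha$, a space of dimension $n\,\lvert\bigcap_\alpha S_\alpha\rvert$ (and similarly for columns).

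By Lemma \ref{l2} the map $T$ induces a weight-preserving automorphism $T_*$ of $\Theta_{n, k}$. Assuming for now that $(k, n) \neq (2, 4)$, Theorem \ref{autom} yields the dichotomy: either $T$ sends every $V_S^{row}$ to some $V_{\pi(S)}^{row}$ and every $V_S^{col}$ to some $V_{\rho(S)}^{col}$, or it interchanges the two families, $V_S^{row} \mapsto V_{\pi(S)}^{col}$ and $V_S^{col} \mapsto V_{\rho(S)}^{row}$, where $\pi, \rho$ are bijections of $\binom{[n]}{k}$ (being restrictions of the bijection $T_*$).

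In the first case, since $T$ is an injective linear map it commutes with intersections of subspaces, so
$$T(R_i) = \bigcap_{S \ni i} T(V_S^{row}) = \bigcap_{S \ni i} V_{\pi(S)}^{row},$$
which is the space of matrices whose nonzero rows lie in $J_i := \bigcap_{S \ni i} \pi(S)$ and hence has dimension $n\lvert J_i\rvert$. As $T$ is bijective, $\dim T(R_i) = \dim R_i = n$, forcing $\lvert J_i\rvert = 1$; writing $J_i = \{\sigma_1(i)\}$ gives $T(R_i) = R_{\sigma_1(i)}$, and since $T$ sends distinct $R_i$ to distinct subspaces, $\sigma_1$ is a permutation. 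The same computation for columns yields $T(C_j) = C_{\sigma_2(j)}$, which is \eqref{eq01}. In the interchanging case the identical argument, with $V^{row}$ replaced by $V^{col}$ on the image side, gives $T(R_i) = C_{\sigma_2(i)}$ and $T(C_j) = R_{\sigma_1(j)}$, that is, \eqref{eq02}.

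The main obstacle is the excluded case $(k, n) = (2, 4)$. Here $k^2 = n(k-1) = 4$, so the cross-edges between the row and column parts carry the same weight as the heaviest within-part edges; the reduced graph $\widehat{\Theta}_{4, 2}$ is then connected and Theorem \ref{autom} does not apply. I expect this case to require a separate, hands-on analysis of the $12$-vertex weighted graph $\Theta_{4, 2}$, using the full weight data together with the fact that $T_*$ is induced by a \emph{linear} bijection, and not merely an abstract graph automorphism, to exclude maps that send part of the row family into the column family. Once the preserve/interchange dichotomy is secured in this case as well, the dimension argument above applies verbatim and completes the proof for all $1 \le k \le n-1$.
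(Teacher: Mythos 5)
Your treatment of the case $(k,n)\neq(2,4)$ is correct and coincides with the paper's: the same intersection identity $R_i=\bigcap_{S\ni i}V_S^{row}$, the same dimension count forcing $\bigcap_{S\ni i}\pi(S)$ to be a singleton, and injectivity of $T$ to make $\sigma_1,\sigma_2$ permutations. (The paper handles the interchanging branch by composing with the transposition $\tau:A\mapsto A^T$ and reducing to the preserving branch; your direct symmetric argument is an immaterial variation.)

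The genuine gap is $(k,n)=(2,4)$, which you explicitly leave as an expectation rather than a proof; in the paper this case occupies roughly half of the argument. Moreover, the route you sketch --- first ``secure the preserve/interchange dichotomy'' for $T_*$ and then run the dimension argument verbatim --- is not how this case can be closed, because the dichotomy fails badly for abstract automorphisms of $\Theta_{4,2}$: every edge of $\Theta_{4,2}$ has weight $4$ except the six edges $\bigl(V_S^{row},V_{[4]\setminus S}^{row}\bigr)$, $\bigl(V_S^{col},V_{[4]\setminus S}^{col}\bigr)$, which have weight $0$ and form a perfect matching, so the automorphism group of $\Theta_{4,2}$ is the full group of vertex permutations preserving that matching; for instance, swapping $V_{\{1,2\}}^{row}\leftrightarrow V_{\{1,2\}}^{col}$ and $V_{\{3,4\}}^{row}\leftrightarrow V_{\{3,4\}}^{col}$ while fixing all other vertices is an automorphism mixing the two families. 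Hence no purely graph-theoretic refinement can deliver the dichotomy; linearity must enter the argument itself, and the paper uses it as follows. The map $T_*$ permutes the six weight-$0$ edges; one splits on how many of the three row--row weight-$0$ edges in (\ref{edges}) have row--row images ($3$, $2$, $1$ or $0$). The extreme cases $3$ and $0$ do give the dichotomy (those three edges cover all six row vertices) and reduce to your argument. In the case of exactly $2$, each $R_i$ is the intersection of two row subspaces lying on those two edges, so $T(R_i)=V_{S_1}^{row}\cap V_{S_3}^{row}$ is a direct sum of rows and the dimension count forces $T(R_i)=R_m$; a count on the matching shows that exactly $2$ of the column--column weight-$0$ edges then have column--column images, giving likewise $T(C_i)=C_{\sigma_2(i)}$, i.e.\ (\ref{eq01}). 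The case of exactly $1$ reduces to the case of exactly $2$ by composing with $\tau$, yielding (\ref{eq02}). Without an argument of this kind, your proof is incomplete for $(k,n)=(2,4)$.
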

	\begin{proof}
		\textit{Case 1. $(k, n) \neq (2, 4)$.}
		By Lemma \ref{l2} the map $T$ induces an automorphism $T_*$ of the graph $\Theta_{n, k}$. Theorem \ref{autom} implies that either $T_*(\Theta_{n, k}^{row}) = \Theta_{n, k}^{row}$ and $T_*(\Theta_{n, k}^{col}) = \Theta_{n, k}^{col}$ or $T_*(\Theta_{n, k}^{row}) = \Theta_{n, k}^{col}$ and $T_*(\Theta_{n, k}^{col}) = \Theta_{n, k}^{row}$.
		
		Assume that $T_*(\Theta_{n, k}^{row}) = \Theta_{n, k}^{row}$ and $T_*(\Theta_{n, k}^{col}) = \Theta_{n, k}^{col}$. Let us fix $i \in [n]$.
		Denote $$\mathcal{D}_i = \left\{S \in {{[n]}\choose{k}} \; \bigg| \; i \in S \right\}.$$
		Since $k < n$ we have 
		$$R_i = \bigcap_{S \in \mathcal{D}_i} T(V_S^{row}).$$
		Since $T_*(\Theta_{n, k}^{row}) = \Theta_{n, k}^{row}$, we have 
		$$\{T(V_S^{row}) \; | \; S \in \mathcal{D}_i\} = \{V_{S'}^{row} \; | \; S' \in \mathcal{D}'_i\}$$
		for some $\mathcal{D}'_i \subseteq {{[n]}\choose{k}}$. Denote $W_i = \cap_{S' \in \mathcal{D}'_i} S'$.
		Therefore we obtain
		$$T(R_i) = \bigcap_{S \in \mathcal{D}_i} T(V_S^{row}) = \bigcap_{S' \in \mathcal{D}'_i} T(V_{S'}^{row}) = \bigoplus_{j \in W_i} R_j.$$
		Since $T$ is injective, we have $n = \dim(T(R_i)) = n|W_i|$, so $|W_i| = 1$. We can define $\sigma_1(i)$ by $W = \{\sigma_1(i)\}$. We obtain a map $\sigma_1: [n] \to [n]$ such that for each $i \in [n]$ we have $T(R_i) = R_{\sigma_1(i)}$. Since $T$ is injective, $\sigma_1$ is a permutation. Similarly, since $\Phi(\Theta_{n, k}^{col}) = \Theta_{n, k}^{col}$, we construct a permutation $\sigma_2 \in S_n$ such that $T(C_i) = C_{\sigma_2(i)}$ for all $i \in [n]$.
		
		Consider the second case where $T_*(\Theta_{n, k}^{row}) = \Theta_{n, k}^{col}$ and $T_*(\Theta_{n, k}^{col}) = \Theta_{n, k}^{row}$. Let $\tau: \Mat_n(\F) \to \Mat_n(\F)$ be the transposing map, i.e. $\tau(A) = A^T$. Then for each $S \in {{[n]}\choose{k}}$ we have $\tau(V_S^{row}) = V_S^{col}$ and $\tau(V_S^{col}) = V_S^{row}$.
		Therefore the map $(T \circ \tau)_*$ satisfies
		$(T \circ \tau)_*(\Theta_{n, k}^{row}) = \Theta_{n, k}^{row}$ and $(T \circ \tau)_*(\Theta_{n, k}^{col}) = \Theta_{n, k}^{col}$ and we can apply the previous case. There exist permutations $\sigma_1, \sigma_2 \in S_n$ such that 
		$$(T \circ \tau)(R_i) = R_{\sigma_1(i)}, \;\; (T \circ \tau)(C_i) = C_{\sigma_2(i)} \;\;\; \mbox{for all} \;\;\; i \in [n].$$
		Since $\tau(R_i) = C_i$ and $\tau(C_i) = R_i$, this immediately implies (\ref{eq02}).
		
		\textit{Case 2. $(k, n) = (2, 4)$.} In this case the map $T: \Mat_4(\F) \to \Mat_4(\F)$ also induces an automorphism $T_*$ of the graph $\Theta_{4, 2}$. However, this automorphism not necessary satisfies the statement of Theorem \ref{autom}, so we will study this case separately.
		
		Let us describe the graph $\Theta_{4, 2}$. It has $6$ vertices of the form $V_{S}^{row}$ and $6$ vertices of the form $V_{S}^{col}$, where $S \in {{[4]}\choose{2}}$. For each $S \in {{[4]}\choose{2}}$ we have
		$$w(V_{S}^{row}, V_{[4] \setminus S}^{row}) = w(V_{S}^{col}, V_{[4] \setminus S}^{col}) = 0,$$
		while the weights of all other edges is equal to $4$. 
		
		We have precisely $6$ edges of weight $0$ and $T_*$ acts on them as a permutation. Consider those $3$ of them which are formed by pairs of row subspaces, namely
		\begin{equation}\label{edges}
			(V^{row}_{\{1, 2\}}, V^{row}_{\{3, 4\}}), \;\; (V^{row}_{\{1, 3\}}, V^{row}_{\{2, 4\}}), \;\; (V^{row}_{\{1, 4\}}, V^{row}_{\{2, 3\}}).
		\end{equation}
		The image under $T_*$ of each of these edges has the form either $(V_{S}^{row}, V_{[4] \setminus S}^{row})$ or $(V_{S}^{col}, V_{[4] \setminus S}^{col})$. 
		
		If all the images have the form $(V_{S}^{row}, V_{[4] \setminus S}^{row})$, then we obtain $T_*(\Theta_{4, 2}^{row}) = \Theta_{4, 2}^{row}$ and, therefore, $T_*(\Theta_{4, 2}^{col}) = \Theta_{4, 2}^{col}$, so we can apply the argument from the case $1$ of the proof. If all the images have the form $(V_{S}^{col}, V_{[4] \setminus S}^{col})$, then we obtain $T_*(\Theta_{4, 2}^{row}) = \Theta_{4, 2}^{col}$ and $T_*(\Theta_{4, 2}^{col}) = \Theta_{4, 2}^{row}$, and we also can apply the argument from the case $1$.
		
		Let us consider the case where images of precisely $2$ edges among (\ref{edges}) have the form $(V_{S}^{row}, V_{[4] \setminus S}^{row})$. Without loss of generality we denote these edges by
		$$(V^{row}_{\{i, j\}}, V^{row}_{\{k, l\}}), \;\; (V^{row}_{\{i, k\}}, V^{row}_{\{j, l\}}),$$
		where $\{i, j, k, l\}$ is a permutation of $\{1, 2, 3, 4\}$. Denote the images of the vertices
		$V^{row}_{\{i, j\}}, V^{row}_{\{k, l\}}, V^{row}_{\{i, k\}}, V^{row}_{\{j, l\}}$
		by
		$V^{row}_{S_1}, V^{row}_{S_2}, V^{row}_{S_3}, V^{row}_{S_4}$, respectively.
		We obtain
		$$T(R_i) = T(V^{row}_{\{i, j\}} \cap V^{row}_{\{i, k\}}) = T(V^{row}_{\{i, j\}}) \cap T(V^{row}_{\{i, k\}}) = $$
		$$ = V^{row}_{S_1} \cap V^{row}_{S_3} = \bigoplus_{m \in S_1 \cap S_3} R_m.$$
		Since $T$ is an isomorphism, we obtain $|S_1 \cap S_3| = 1$, so $T(R_i) = R_m$ for some $m \in [4]$. After applying the same argument to $j, k$ and $l$ we obtain a map $\sigma_1: [4] \to [4]$ such that for each $i \in [4]$ we have $T(R_i) = R_{\sigma_1(i)}$. Since $T$ is injective, $\sigma_1$ is a permutation. Note that the images of precisely $2$ edges among 
		\begin{equation*}\label{edges2}
			(V^{col}_{\{1, 2\}}, V^{col}_{\{3, 4\}}), \;\; (V^{col}_{\{1, 3\}}, V^{col}_{\{2, 4\}}), \;\; (V^{col}_{\{1, 4\}}, V^{col}_{\{2, 3\}}).
		\end{equation*}
		have the form $(V_{S}^{col}, V_{[4] \setminus S}^{col})$. Therefore, similarly we obtain that there exists a permutation $\sigma_2 \in S_4$ such that $T(C_i) = C_{\sigma_2(i)}$ for all $i \in [4]$.
		
		Finally, we need to consider the case where image of precisely $1$ edge among (\ref{edges}) has the form $(V_{S}^{row}, V_{[4] \setminus S}^{row})$. Then if we consider the map $T \circ \tau$ of $T$, then the images under $(T \circ \tau)_*$ of precisely $2$ edges among (\ref{edges}) have the form $(V_{S}^{row}, V_{[4] \setminus S}^{row})$, so we can apply the previous result and obtain that there exist permutations $\sigma_1, \sigma_2 \in S_4$ such that 
		$$(T \circ \tau)(R_i) = R_{\sigma_1(i)}, \;\; (T \circ \tau)(C_i) = C_{\sigma_2(i)} \;\;\; \mbox{for all} \;\;\; i \in [4].$$ Therefore, we have $T(R_i) = C_{\sigma_2(i)}$ and $T(C_i) = R_{\sigma_1(i)}$ for all $i \in [4]$.
	\end{proof}

	\begin{lemma}\label{step1}
		Let $1 \leq k \leq n-1$ and let $T: \Mat_{n}(\F) \to \Mat_{n}(\F)$ be a bijective linear map satisfying $T(\Lambda^{\leq k}) \subseteq \Lambda^{\leq k}$. Then there exist permutations $\sigma_1, \sigma_2 \in S_n$ and a matrix $C \in \Mat_n(\F)$ consisting of nonzero elements such that either
		\begin{equation}\label{eq21}
			T(A) = C \circ \Bigl( P(\sigma_1) A P(\sigma_2) \Bigr) \;\;\; \mbox{for all} \;\;\; A \in \Mat_n(\F)
		\end{equation}
		or
		\begin{equation}\label{eq22}
			T(A) = C \circ \Bigl( P(\sigma_1) A^T P(\sigma_2) \Bigr) \;\;\; \mbox{for all} \;\;\; A \in \Mat_n(\F).
		\end{equation}
	\end{lemma}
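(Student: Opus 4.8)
The plan is to promote the information supplied by Lemma~\ref{step0}, which only controls the action of $T$ on whole rows $R_i$ and whole columns $C_j$, to a complete description of $T$ on each matrix unit, and then to extend by linearity. The elementary but crucial observation I would record first is that the intersection of a row space and a column space is a single coordinate line: for all $i, j \in [n]$,
$$R_i \cap C_j = \langle E_{i, j} \rangle_\F,$$
since a matrix supported on row $i$ and simultaneously on column $j$ can be nonzero only in position $(i,j)$. Because $T$ is injective, it commutes with intersections, i.e. $T(R_i \cap C_j) = T(R_i) \cap T(C_j)$, exactly as used in the proof of Lemma~\ref{l2}.

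In the first case of Lemma~\ref{step0}, where $T(R_i) = R_{\sigma_1(i)}$ and $T(C_j) = C_{\sigma_2(j)}$, combining these two facts gives $T(\langle E_{i,j}\rangle_\F) = R_{\sigma_1(i)} \cap C_{\sigma_2(j)} = \langle E_{\sigma_1(i), \sigma_2(j)}\rangle_\F$. Hence there are scalars $c_{i,j}$, necessarily nonzero since $T$ is injective, with $T(E_{i,j}) = c_{i,j} E_{\sigma_1(i), \sigma_2(j)}$. Expanding an arbitrary $A = \sum_{i,j} a_{i,j} E_{i,j}$ and using linearity yields $T(A) = \sum_{i,j} a_{i,j} c_{i,j} E_{\sigma_1(i), \sigma_2(j)}$. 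A direct check from the definition of $P(\sigma)$ shows that $P(\sigma_1) E_{i,j} P(\sigma_2^{-1}) = E_{\sigma_1(i), \sigma_2(j)}$, so that $P(\sigma_1) A P(\sigma_2^{-1})$ has precisely the entry $a_{\sigma_1^{-1}(\alpha), \sigma_2^{-1}(\beta)}$ in position $(\alpha, \beta)$. Matching the two expressions entrywise identifies $T(A)$ with $C \circ \bigl(P(\sigma_1) A P(\sigma_2^{-1})\bigr)$, where $C = (C_{\alpha, \beta})$ is defined by $C_{\alpha, \beta} = c_{\sigma_1^{-1}(\alpha), \sigma_2^{-1}(\beta)}$. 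Since $C$ is merely a permutation-rearrangement of the nonzero scalars $c_{i,j}$, all its entries are nonzero; after renaming $\sigma_2^{-1}$ as $\sigma_2$ this is exactly (\ref{eq21}).

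Finally, the second case of Lemma~\ref{step0}, where rows and columns are interchanged, is handled identically: here $T(R_i) = C_{\sigma_2(i)}$ and $T(C_j) = R_{\sigma_1(j)}$, so $T(\langle E_{i,j}\rangle_\F) = C_{\sigma_2(i)} \cap R_{\sigma_1(j)} = \langle E_{\sigma_1(j), \sigma_2(i)}\rangle_\F$. The same expansion by linearity, now using the transpose identity $E_{i,j} \mapsto E_{j,i}$, identifies $T$ with $C \circ \bigl(P(\sigma_1) A^T P(\sigma_2^{-1})\bigr)$ for a suitable rearrangement $C$ of the nonzero scalars, which yields (\ref{eq22}). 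I do not expect any serious obstacle in this argument; the only delicate point is keeping the index conventions for the permutation matrices consistent, in particular whether $\sigma$ or $\sigma^{-1}$ appears, but since the statement only asserts the existence of suitable permutations $\sigma_1, \sigma_2$, this bookkeeping is purely cosmetic.
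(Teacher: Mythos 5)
Your proposal is correct and follows essentially the same route as the paper's own proof: apply Lemma~\ref{step0}, use injectivity to get $T(R_i \cap C_j) = T(R_i) \cap T(C_j) = \langle E_{\sigma_1(i),\sigma_2(j)}\rangle$ (or the transposed version), extract nonzero scalars $c_{i,j}$, and assemble them into the Hadamard factor $C$. The only difference is presentational: the paper absorbs the index rearrangement by naming the scalar $c_{\sigma_1(i),\sigma_2(j)}$ from the start, whereas you carry out the $P(\sigma)$-conjugation bookkeeping explicitly and rename $\sigma_2^{-1}$ at the end, which is indeed purely cosmetic.
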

	\begin{proof}
		Let us apply Lemma \ref{step0}. If (\ref{eq01}) holds, we obtain
		$$T \Bigl( \langle E_{i, j}\rangle \Bigr)  = T(R_i \cap C_j) = R_{\sigma_1(i)} \cap C_{\sigma_2(j)} = \langle E_{\sigma_1(i), \sigma_2(j)}\rangle.$$
		We have $\langle E_{i, j}\rangle \cong \langle E_{\sigma_1(i), \sigma_2(j)}\rangle \cong \F$, so there exists $c_{\sigma_1(i), \sigma_2(j)} \in \F$ such that $T(E_{i, j}) = c_{\sigma_1(i), \sigma_2(j)} E_{\sigma_1(i), \sigma_2(j)}$. Since $T$ is bijective, we have $c_{\sigma_1(i), \sigma_2(j)} \neq 0$. Consider the matrix $C = (c_{i, j}) \in \Mat_n(\F)$. The condition $T(E_{i, j}) = c_{\sigma_1(i), \sigma_2(j)} E_{\sigma_1(i), \sigma_2(j)}$ is equivalent to formula (\ref{eq21}), so the lemma is proved it this case.
		
		If (\ref{eq02}) holds, we have 
		$$T \Bigl( \langle E_{i, j}\rangle \Bigr)  = T(R_i \cap C_j) = C_{\sigma_2(i)} \cap R_{\sigma_1(j)} = \langle E_{\sigma_1(j), \sigma_2(i)}\rangle.$$
		Similarly we obtain $T(E_{i, j}) = c_{\sigma_1(j), \sigma_2(i)} E_{\sigma_1(j), \sigma_2(i)}$ for some nonzero $c_{i, j} \in \F$, which is equivalent to formula (\ref{eq22}). This concludes the proof.
	\end{proof}
	
	Recall that for $A \in \Mat_{n}(\F)$ and $J_1,J_2\subseteq [n]$, we denote by $A[J_1\vert J_2]$ the $|J_1|\times |J_2|$ submatrix of $A$ lying on the intersection of the rows with the indices from $J_1$ and the columns with the indices from $J_2$.
	The next step is as follows.
	
	\begin{lemma}\label{step2}
		Let $1 \leq k \leq n-1$ and let $C = (c_{i, j}) \in \Mat_n(\F)$ be a matrix consisting of nonzero elements such that for each $A \in \Lambda^{\leq k}$ we have $C \circ A \in \Lambda^{\leq k}$. Then there exist nonzero elements $d_{1, 1} \dots, d_{1, n}, d_{2, 1}, \dots, d_{2, n} \in \F$ such that $c_{i, j} = d_{1, i} d_{2, j}$, i.e. $C = d_1^T d_2$, where $d_1 = (d_{1, 1} \dots, d_{1, n})^T \in \F^n$ and $d_2 = (d_{2, 1} \dots, d_{2, n})^T \in \F^n$.
	\end{lemma}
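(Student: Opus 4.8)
The plan is to reduce the global hypothesis on $C$ to a family of $2\times 2$ multiplicative relations among its entries, and then to read off the rank-one factorization from those relations. Fix two distinct rows $i\neq i'$ and two distinct columns $j\neq j'$. Since $k+1\leq n$, extend $\{i,i'\}$ to a $(k+1)$-subset $I=\{i,i',i_3,\dots,i_{k+1}\}\subseteq[n]$ and $\{j,j'\}$ to a $(k+1)$-subset $J=\{j,j',j_3,\dots,j_{k+1}\}\subseteq[n]$. The key device is the matrix $A\in\Mat_n(\F)$ that vanishes outside the block $I\times J$ and, on that block, is block-diagonal: it carries the pattern $\left(\begin{smallmatrix}1&1\\-1&1\end{smallmatrix}\right)$ on rows $i,i'$ against columns $j,j'$, ones on the pairs $(i_t,j_t)$ for $t=3,\dots,k+1$, and zeros elsewhere.

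First I would record the elementary \emph{localization} fact: if a matrix is supported inside a single block $I\times J$ with $|I|=|J|=k+1$, then it lies in $\Lambda^{\leq k}$ if and only if the permanent of that block vanishes. Indeed, a $(k+1)\times(k+1)$ submatrix with a nonzero generalized diagonal must use only nonzero entries, which forces its row and column index sets to equal exactly $I$ and $J$; hence $A[I\vert J]$ is the only $(k+1)\times(k+1)$ submatrix that can have nonzero permanent. Applying this to $A$ and using that the permanent of a block-diagonal matrix is the product of the permanents of its diagonal blocks, I get $\per(A[I\vert J])=(1\cdot1+1\cdot(-1))\cdot1=0$, so $A\in\Lambda^{\leq k}$. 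Because Hadamard multiplication by $C$ does not enlarge the support, the same localization fact applies to $C\circ A$, so the hypothesis $C\circ A\in\Lambda^{\leq k}$ is equivalent to $\per\big((C\circ A)[I\vert J]\big)=0$.

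Next I would evaluate this last permanent. As $(C\circ A)[I\vert J]$ is again block-diagonal, its permanent factors as $\big(c_{i,j}c_{i',j'}-c_{i,j'}c_{i',j}\big)\prod_{t=3}^{k+1}c_{i_t,j_t}$, and since every entry of $C$ is nonzero the trailing product is nonzero. Hence its vanishing forces
$$c_{i,j}\,c_{i',j'}=c_{i,j'}\,c_{i',j}.$$
As $i\neq i'$ and $j\neq j'$ were arbitrary, this relation holds for every such quadruple. To conclude, I would set $d_{1,i}=c_{i,1}$ and $d_{2,j}=c_{1,j}/c_{1,1}$, which are nonzero, and verify $c_{i,j}=d_{1,i}d_{2,j}$ directly: the cases $i=1$ and $j=1$ are immediate from the definitions, and for $i,j\neq1$ the relation applied to rows $\{1,i\}$ and columns $\{1,j\}$ gives $c_{1,1}c_{i,j}=c_{1,j}c_{i,1}$, which rearranges to the claim.

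I expect the only genuinely delicate point to be the design and justification of the padded test matrix, that is, checking that the block-diagonal padding introduces no spurious $(k+1)\times(k+1)$ submatrix of nonzero permanent and multiplies the $2\times2$ contribution only by the nonzero scalar $\prod_{t}c_{i_t,j_t}$. Once this localization to a single $2\times2$ relation is secured, the permanent computation and the rank-one reconstruction are routine. The pattern $\left(\begin{smallmatrix}1&1\\-1&1\end{smallmatrix}\right)$ is chosen precisely to have zero permanent while keeping both of its diagonal products nonzero, which is what makes the resulting relation among the $c$'s nondegenerate.
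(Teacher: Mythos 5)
Your proposal is correct and follows essentially the same route as the paper: the same test matrix (the zero-permanent $2\times 2$ pattern $E_{i,j}+E_{i,j'}-E_{i',j}+E_{i',j'}$ padded by a partial permutation pattern), the same localization argument showing the padded block is the only $(k+1)\times(k+1)$ submatrix that can have nonzero permanent, and the same derivation of the relations $c_{i,j}c_{i',j'}=c_{i,j'}c_{i',j}$ forcing $\rk C=1$. The only cosmetic difference is that you construct $d_1,d_2$ explicitly from the first row and column, whereas the paper simply invokes the rank-one factorization.
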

	\begin{proof}
		Let us show that $\rk C = 1$. It is sufficient to prove that for any $i, m, j, l$ with $i \neq m$ and $j \neq l$ we have $c_{i, j}c_{m, l} - c_{i, l}c_{m, j} = 0$. Let $r_1, \dots, r_{k-1} \in [n] \setminus \{i, m\}$ be pairwise distinct and let $s_1, \dots, s_{k-1} \in [n] \setminus \{j, l\}$ be pairwise distinct. Consider the matrix
		$$X = E_{i, j} + E_{i, l} - E_{m, j} + E_{m, l} + \sum_{t=1}^{k-1}E_{r_t, s_t} \in \Mat_n(\F).$$
		We claim that $\prk(X) = k$. Indeed, the only $(k+1) \times (k+1)$-submatrix we need to check is $X[r_1, \dots, r_{k-1}, i, m \vert s_1, \dots, s_{k-1}, j, l]$, because all other rows and columns are zero. Since the submatrix $X[r_1, \dots, r_{k-1} \vert s_1, \dots, s_{k-1}]$ is a permutation matrix, we have
		$$ \per X[r_1, \dots, r_{k-1}, i, m \vert s_1, \dots, s_{k-1}, j, l] = \per (E_{i, j} + E_{i, l} - E_{m, j} + E_{m, l}) = 0.$$
		
		Since $X \in \Lambda^{\leq k}$ it follows that $C \circ X \in \Lambda^{\leq k}$. In particular, we obtain
		$$ 0 = \per (C \circ X)[r_1, \dots, r_{k-1}, i, m \vert s_1, \dots, s_{k-1}, j, l] = $$ 
		$$ = \Bigl( \prod_{t=1}^{k-1} c_{r_t, s_t} \Bigr) \per (c_{i, j}E_{i, j} + c_{i, l}E_{i, l} - c_{m, j}E_{m, j} + c_{m, l}E_{m, l}) = $$
		$$ = \Bigl( \prod_{t=1}^{k-1} c_{r_t, s_t} \Bigr)\Bigl( c_{i, j}c_{m, l} - c_{i, l}c_{m, j}\Bigr).$$
		Since $C$ consists of nonzero elements, we obtain $c_{i, j}c_{m, l} - c_{i, l}c_{m, j} = 0$.
		
		Therefore $\rk C \leq 1$ and there exist $d_1 = (d_{1, 1} \dots, d_{1, n})^T \in \F^n$ and $d_2 = (d_{2, 1} \dots, d_{2, n})^T \in \F^n$ such that $C = d_1^T d_2$. Since all the elements of $C$ are nonzero, it follows that $d_{1, 1} \dots, d_{1, n}, d_{2, 1}, \dots, d_{2, n} \in \F$ are also nonzero.
	\end{proof}
	
	Now we are ready to prove the main result.
	\begin{proof}[Proof of Theorem \ref{mainth1}]
		By Lemma \ref{step1} there exist permutations $\sigma_1, \sigma_2 \in S_n$ and a matrix $C \in \Mat_n(\F)$ consisting of nonzero elements such that either (\ref{eq21}) or (\ref{eq22}) holds. By Lemma \ref{if1} the maps $A \mapsto P(\sigma_1) A P(\sigma_2)$ and $P(\sigma_1) A^T P(\sigma_2)$ preserve the set $\Lambda^{\leq k}$, so the matrix $C$ satisfies $C \circ A \in \Lambda^{\leq k}$ whenever $A \in \Lambda^{\leq k}$. Therefore, by Lemma \ref{step2}, there exist nonzero elements $d_{1, 1} \dots, d_{1, n}, d_{2, 1}, \dots, d_{2, n} \in \F$ such that $c_{i, j} = d_{1, i} d_{2, j}$. In particular, for each $A \in \Mat_n(\F)$ we have $C \circ A = D_1 A D_2$, where $D_1 = \diag(d_{1, 1}, \dots, d_{1, n})$ and $D_2 = \diag(d_{2, 1}, \dots, d_{2, n})$. Hence formulas (\ref{eq21}) and (\ref{eq22}) imply formulas (\ref{eq1}) and (\ref{eq2}), respectively. This concludes the proof.
	\end{proof}

\end{document}